 \numberwithin{equation}{section} 
\newcommand{\define}[1]{\emph{#1}}
\def\R{\mathbb{R}}
\def\Z{\mathbb{Z}}
\def\Lam{\Lambda}
\def\1{\mathds{1}}
\renewcommand\le{\leqslant}
\renewcommand\ge{\geqslant}
\renewcommand\geq{\geqslant}
\renewcommand\hat{\widehat}
\newcommand\dotprod[2]{\langle #1 , #2 \rangle}
\theoremstyle{plain}
\newtheorem{theorem}{Theorem}[section]
\newtheorem{lem}[theorem]{Lemma}
\newtheorem*{claim*} {Claim}
\newcommand{\thmref}[1]{Theorem~\ref{#1}}
\newcommand{\secref}[1]{Section~\ref{#1}}
\newcommand{\lemref}[1]{Lemma~\ref{#1}}
\theoremstyle{definition}
\newtheorem*{remark*}{Remark}
\newenvironment{enumerate-math}
{\begin{enumerate}
\addtolength{\itemsep}{5pt}
}
{\end{enumerate}}
\begin{document}

 \title{Fourier frames for singular measures and pure type phenomena}

\author{Nir Lev}
\address{Department of Mathematics, Bar-Ilan University, Ramat-Gan 5290002, Israel}
\email{levnir@math.biu.ac.il}

\date{April 4, 2017}
\subjclass[2010]{42C15, 42B10}
\thanks{Research supported by ISF grant No.\ 225/13 and ERC Starting Grant No.\ 713927.}

\begin{abstract}
Let $\mu$ be a positive measure on $\R^d$. It is known that if the space $L^2(\mu)$ has a frame of exponentials then the measure $\mu$ must be of ``pure type'': it is either discrete,  absolutely continuous or singular continuous. It has been  conjectured that  a similar phenomenon should be true within the class of singular continuous measures, in the sense that $\mu$ cannot admit an exponential  frame if it has components of different dimensions. We prove that this is not the case by showing that the sum of an arc length measure and a surface measure can have a frame of exponentials. On the other hand we prove that a measure of this form cannot have a frame of exponentials if the surface has a point of non-zero Gaussian curvature. This is in spite of the fact that each ``pure'' component of the measure separately may admit such a frame.
\end{abstract}

\maketitle


\section{Introduction}

\subsection{}
Let $\mu$ be a positive and finite Borel measure on $\R^d$.
By a \define{Fourier frame} for the space  $L^2(\mu)$ one means a
system of exponential functions
\[ E(\Lambda)=\left\{ e_\lambda \right\}_{\lambda\in\Lambda}, \quad
e_\lambda(x)=e^{2\pi i \dotprod{\lambda}{x}} \]
(where $\Lambda$ is a countable subset of $\R^d$) which constitutes a
\define{frame} in the space. The latter means that there are constants
$0<A,B<\infty$ such that the inequalities 
\[
		A \|f\|^2_{L^2(\mu)}
		 \le \sum_{\lambda\in\Lambda} \left|
		\dotprod{f}{e_\lambda}_{L^2(\mu)} \right|^2 
	\le B\|f\|^2_{L^2(\mu)} 
\]
hold for every $f\in L^2(\mu)$.
The existence of a Fourier frame $E(\Lambda)$ for $L^2(\mu)$ allows one
to decompose in a ``stable'' (but generally non-unique)
way any function $f$ from the space in a Fourier series
$ f= \sum_{\lambda\in \Lambda} c_\lambda e_\lambda $
with frequencies belonging to $\Lambda$ (see \cite{You01}).
\par
For which measures $\mu$ does 
a Fourier frame  exist? The origin of this question goes back to Fuglede
\cite{Fug74} who initiated a study of orthogonal bases 
of exponentials over domains in $\R^d$ endowed with Lebesgue measure,
and to Jorgensen and Pedersen \cite{JP98} who 
discovered the existence of fractal measures which admit such
orthogonal bases. It is well-known, however, that the existence of an orthogonal
basis $E(\Lambda)$ for $L^2(\mu)$ is a strong requirement, satisfied
by a relatively small class of measures $\mu$. Hence it is of interest
to understand whether measures which do not admit such a basis can at least have
a frame of exponentials. 

\subsection{}
It is known \cite{LW06, HLL13} that if a measure $\mu$ has a Fourier
frame, then it must be of ``pure type'': $\mu$ is either discrete,
absolutely continuous or singular continuous. 
\par
The case when $\mu$ is a
discrete measure is well understood: $\mu$ has a Fourier frame if and
only if it has finitely many atoms \cite{HLL13}. It is also known
precisely which absolutely continuous measures $\mu$ have a Fourier
frame: this is the case if and only if $\mu$ is supported on a set of
finite Lebesgue measure in $\R^d$, and its density function is bounded
from above and from below almost everywhere on the support
\cite{Lai11, DL14, NOU16} (in the last paper the authors used the
solution of the Kadison-Singer problem \cite{MSS15} to prove the
existence of a Fourier frame in the case when the support is
unbounded). 
\par
The case when the measure $\mu$ is singular and
continuous, on the other hand, is much less understood. In this
context mainly self-similar measures have been studied (see e.g.\
\cite{DLW16} and the references therein). However, even the following
question \cite{Str00} is still open: does the middle-third Cantor
measure have a Fourier frame? (It is known \cite{JP98} that this
measure has no orthogonal basis of exponentials.)
\subsection{}
It has been conjectured that a  pure type phenomenon should also
exist within the class of singular continuous measures.
Namely, if such a measure has ``components of different dimensions'' then
it cannot have a Fourier frame. A concrete formulation of such a
conjecture was given explicitly in \cite[Conjecture 5.2]{DL14}. 
\par
The
paper \cite{DHSW11} contains some results in this direction. These
results establish a connection between the fractal dimension of some
self-similar measures $\mu$ and the ``Beurling dimension'' of certain
Fourier frames for these measures. 
\par
However, in the present paper we obtain some results which
contradict the above conjecture. In particular, we prove the following:
\begin{theorem} \label{thm_A2.1}
	There is a measure $\mu$ on $\R^d$ $(d\ge 3)$ which is the
	sum of the arc length measure on a curve and the area measure
	on a hypersurface, such that $L^2(\mu)$ has a Fourier
	frame.
\end{theorem}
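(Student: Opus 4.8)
\medskip
\noindent\emph{Proof plan.}
The plan is to build $\mu$ from small, axis-aligned flat pieces and to write down a Fourier frame for it by hand. Write $\R^d=\R^{d-1}\times\R$ and $x=(x',x_d)$, fix a small $\eta>0$ (to be pinned down at the end), put $Q=[0,\eta]^{d-1}$, let $S=Q\times\{0\}$ be the flat patch of the hyperplane $\{x_d=0\}$ carrying its area measure $\sigma_S$, and let $C=\{0\}\times[0,\eta]$ be the segment on the $x_d$-axis carrying its arc length measure $\sigma_C$; set $\mu=\sigma_S+\sigma_C$. Then $\sigma_S\perp\sigma_C$ (they meet only at the origin, which both annihilate), so $L^2(\mu)=L^2(Q,dx')\oplus L^2([0,\eta],dt)$; writing $f=(g,h)$ in this decomposition and using that $e_\lam$ restricts to $x'\mapsto e^{2\pi i\dotprod{\lam'}{x'}}$ on $S$ and to $t\mapsto e^{2\pi i\lam_d t}$ on $C$, one gets
\[
\dotprod{f}{e_\lam}_{L^2(\mu)}=\ft g(\lam')+\ft h(\lam_d),\qquad \lam=(\lam',\lam_d),
\]
where $\ft g,\ft h$ denote Fourier transforms of the functions extended by $0$. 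So the task becomes: find a countable $\Lam\subset\R^d$ with $\sum_{\lam\in\Lam}\bigl|\ft g(\lam')+\ft h(\lam_d)\bigr|^2\asymp\|g\|^2+\|h\|^2$ for all $g\in L^2(Q)$ and $h\in L^2([0,\eta])$.

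I would take $\Lam=\Lam_1\cup\Lam_2$, where $\Lam_1=\{(\Delta n u,\,n):n\in\Z\}$ ``resolves'' the curve and $\Lam_2=\{(m,\,\Delta\beta(m)):m\in\Z^{d-1}\}$ ``resolves'' the surface; here $u\in\R^{d-1}$ is a fixed unit vector, $\beta\colon\Z^{d-1}\to\Z$ is a bijection, and $\Delta$ is a large constant, $\Delta\ge 1/\eta$ (taking $\Delta$ irrational also gives $\Lam_1\cap\Lam_2=\varnothing$). On the ``diagonal'' the construction is exact: since $\eta<1$, the restrictions of $\{e^{2\pi i n t}\}_{n\in\Z}$ to $[0,\eta]$ and of $\{e^{2\pi i\dotprod{m}{x'}}\}_{m\in\Z^{d-1}}$ to $Q\subset[0,1]^{d-1}$ are Parseval frames, whence $\sum_{n}|\ft h(n)|^2=\|h\|^2$ and $\sum_{m}|\ft g(m)|^2=\|g\|^2$. ``Off the diagonal'' one meets only $\Delta$-separated exponentials evaluated on the tiny sets $Q$ and $[0,\eta]$, and the decisive quantitative input is that such systems are Bessel with \emph{arbitrarily small} bound once $\eta$ is small and $\Delta\ge1/\eta$: rescaling $Q=\eta\,[0,1]^{d-1}$ turns a $\Delta$-separated family on $Q$ into an $(\eta\Delta)$-separated one on $[0,1]^{d-1}$, so its Bessel bound on $L^2(Q)$ is at most $\eta^{d-1}$ times the finite, $\eta$-independent uniform Bessel bound of $1$-separated exponentials on $L^2([0,1]^{d-1})$ (a standard Plancherel--P\'olya estimate), and similarly with $\eta$ in place of $\eta^{d-1}$ for $L^2([0,\eta])$.

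The main obstacle is the lower frame bound: a priori the two singular components of $\mu$ could cancel destructively against one another, and the estimate above is exactly what rules this out. Fix $\eta$ so small (and set $\Delta=1/\eta$) that both off-diagonal Bessel bounds above are $<\tfrac18$. Then, since the $x_d$-frequencies along $\Lam_2$ run over the $\Delta$-separated set $\Delta\Z$, applying $|a+b|^2\ge\tfrac12|a|^2-2|b|^2$ term by term,
\[
\begin{aligned}
\sum_{\lam\in\Lam_2}\bigl|\ft g(\lam')+\ft h(\lam_d)\bigr|^2
&=\sum_{m\in\Z^{d-1}}\bigl|\ft g(m)+\ft h(\Delta\beta(m))\bigr|^2\\
&\ge\tfrac12\sum_{m\in\Z^{d-1}}|\ft g(m)|^2-2\sum_{m\in\Z^{d-1}}\bigl|\ft h(\Delta\beta(m))\bigr|^2
\ \ge\ \tfrac12\|g\|^2-\tfrac14\|h\|^2,
\end{aligned}
\]
and, symmetrically, $\sum_{\lam\in\Lam_1}\bigl|\ft g(\lam')+\ft h(\lam_d)\bigr|^2\ge\tfrac12\|h\|^2-\tfrac14\|g\|^2$; adding the two gives the lower bound $\sum_{\lam\in\Lam}\bigl|\ft g(\lam')+\ft h(\lam_d)\bigr|^2\ge\tfrac14(\|g\|^2+\|h\|^2)$. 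The same computation with $|a+b|^2\le2|a|^2+2|b|^2$ yields the upper bound $\le\tfrac94(\|g\|^2+\|h\|^2)$. Hence $E(\Lam)$ is a Fourier frame for $L^2(\mu)$, which proves the theorem (for $d=3$ this is a segment together with a flat square).

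A couple of remarks on the plan. The axis-aligned placement of $S$ and $C$ is what makes $\dotprod{f}{e_\lam}$ split cleanly as $\ft g(\lam')+\ft h(\lam_d)$; in general position all coordinates mix and the computations are heavier, though the same idea should go through. One may also keep $S$ and $C$ of unit size and replace the two Parseval frames by heavily oversampled families (frequencies $n/N$ and $m/M$ with $N,M$ large), so that the diagonal terms dominate the off-diagonal ones without any shrinking. In any case the hypersurface in the construction is a flat patch of a hyperplane, of everywhere vanishing Gaussian curvature --- in accordance with the companion negative result mentioned in the abstract.
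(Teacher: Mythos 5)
Your construction is correct, and it proves the theorem; but it takes a somewhat different route than the paper. The paper deduces Theorem \ref{thm_A2.1} from a general statement (Theorem \ref{thm_B2.1}): for \emph{any} two continuous measures $\mu$ on $\R^n$ and $\nu$ on $\R^m$ each admitting a Fourier frame, the sum $\mu\times\delta_0+\delta_0\times\nu$ admits one. The mechanism there is combinatorial oversampling: to each frequency $u$ of the $\mu$-frame one attaches a block $F(u)$ of $q$ distinct frequencies of the $\nu$-frame (with $q>2B/A$ and the blocks pairwise disjoint), so that after applying $|a+b|^2\ge\tfrac12|a|^2-|b|^2$ the diagonal term is multiplied by $q$ while the cross term is only bounded by the fixed Bessel constant $B$. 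Your argument has the same skeleton --- $\Lam=\Lam_1\cup\Lam_2$, each family tailored to one component, the splitting $\dotprod{f}{e_\lam}=\ft g(\lam')+\ft h(\lam_d)$, and the same elementary inequality --- but you achieve the domination of the cross terms differently: instead of boosting the diagonal by multiplicity, you shrink the two flat pieces to size $\eta$ and place the ``tag'' coordinates on a $\Delta$-separated set with $\Delta\ge 1/\eta$, so that the off-diagonal Bessel constants themselves become small (of order $\eta$ and $\eta^{d-1}$ via rescaling plus a Plancherel--P\'olya bound), while the diagonal sums are exact Parseval identities. This is precisely the alternative strategy the paper itself points out in its final section (a frame for one component whose cross Bessel constant on the other is small relative to its lower frame bound). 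What the paper's route buys is generality --- Theorem \ref{thm_B2.1} covers arbitrary continuous measures with frames, including fractal ones, and yields Theorem \ref{thm_B2.2}; what yours buys is a completely explicit, self-contained example with concrete frequency sets (modulo the standard separated-points Bessel estimate). Only cosmetic points remain: to have $\Delta=1/\eta$ irrational choose $\eta$ irrational, and choose the bijection $\beta$ with $\beta(0)\neq 0$ so that $\Lam_1\cap\Lam_2=\varnothing$ (an overlap at the single point $(0,0)$ would in any case be harmless).
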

The result thus shows that a measure with a Fourier frame can nevertheless have
components of different dimensions. Actually, the measure $\mu$ in our
example is simply the sum of the arc length measure on the segment
$[0,1]\times \{0\}^{d-1}$ and the area measure on the hypersurface
$\{0\}\times  [0,1]^{d-1}$. We will obtain 
\thmref{thm_A2.1} as a special case of a more general
result (\thmref{thm_B2.1}) that allows one to construct many examples
of ``mixed type'' measures which have a Fourier frame.

\subsection{}
On the other hand, we will see that under additional geometric assumptions on the
components of the measure, one can establish pure type phenomena with
respect to the dimension of these components. For example, we will prove the
following result: 
\begin{theorem}
	\label{thm_A2.2}
	Let $\mu$ be a measure on $\R^d$ $(d\ge 3)$ which is the sum
	of the arc length measure on an open subset of a smooth
	curve, and the area measure on an open subset of a smooth
	hypersurface. If the hypersurface has a point of non-zero Gaussian curvature,
	then $L^2(\mu)$ does not admit a Fourier frame.
\end{theorem}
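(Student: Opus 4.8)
The plan is to reach a contradiction by combining two incompatible density estimates for $\Lam$: an upper bound of order $R$ for the number of points in a ball of radius $R$, forced by the one‑dimensionality of the curve, against a lower bound of order $R^{d-1}$, forced by the curvature of the hypersurface. Write $\mu=\sigma+\nu$, where $\sigma$ is the arc length measure on the curve and $\nu$ the area measure on the hypersurface. For relatively open $\Gam_0\subset\supp\sigma$ and $S_0\subset\supp\nu$, functions supported on $\Gam_0\cup S_0$ have the same $L^2$‑norm and the same analysis coefficients with respect to $\mu$ and to $\sigma|_{\Gam_0}+\nu|_{S_0}$, so a frame $E(\Lam)$ for $L^2(\mu)$ restricts to a frame for $L^2(\sigma|_{\Gam_0}+\nu|_{S_0})$, hence to a frame for $L^2(\nu|_{S_0})$ and to a Bessel sequence for $L^2(\sigma|_{\Gam_0})$. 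Since non‑zero Gaussian curvature is an open condition, I may, after an affine change of variables, take $S_0$ to be the graph $x_d=\phi(x')$, $x'\in B$, over a small ball $B\subset\R^{d-1}$ lying in a unit cube, with $\phi$ smooth and $\nabla^2\phi$ non‑degenerate on all of $B$, so that $\nabla\phi$ is a diffeomorphism of $B$ onto an open subset of $\R^{d-1}$; and $\Gam_0$ a short arc disjoint from $S_0$. It suffices to rule out such an $E(\Lam)$.

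\emph{The curve bound.} For $R$ large let $\alpha\subset\Gam_0$ be a sub‑arc of arc length $1/R$ and $f:=R^{1/2}\1_\alpha\in L^2(\sigma)$, so $\|f\|_{L^2(\sigma)}=1$. If $|\lam|\le R/10$ and $x_0$ is the midpoint of $\alpha$, then $|\dotprod{\lam}{x-x_0}|\le\tfrac1{10}$ for all $x\in\alpha$, whence $|\dotprod{f}{e_\lam}_{L^2(\sigma)}|\ge\tfrac12R^{-1/2}$. The Bessel inequality for $\{e_\lam|_{\Gam_0}\}$ therefore gives $\tfrac14R^{-1}\#\bigl(\Lam\cap B(0,R/10)\bigr)\le B$, that is, $\#\bigl(\Lam\cap B(0,R)\bigr)\le C_0R$ for all $R\ge1$.

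\emph{The hypersurface bound.} On $L^2(B)$ consider the time–frequency concentration operator $\1_B\,\F^{-1}\,\1_{[-T,T]^{d-1}}\,\F\,\1_B$, and let $V_T$ be the span of its eigenfunctions with eigenvalue $\ge\tfrac12$. By the Landau–Widom asymptotics, $\dim V_T\ge|B|(2T)^{d-1}-o(T^{d-1})\gtrsim T^{d-1}$, while for $\psi\in L^2(B)$ one has $\|P_{V_T}\psi\|^2\le 2\int_{[-T,T]^{d-1}}|\ft\psi|^2$. Under the natural isometry $L^2(\nu|_{S_0})\cong L^2(B)$, $e_\lam|_{S_0}$ corresponds to $\psi_\lam(u):=\sqrt{J(u)}\,e^{2\pi i(\dotprod{\lam'}{u}+\lam_d\phi(u))}$, where $\lam=(\lam',\lam_d)$ and $J$ is the area element. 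Pairing the lower frame bound for $L^2(\nu|_{S_0})$ against an orthonormal basis of $V_T$ and interchanging sums,
\[
 \sum_{\lam\in\Lam}\|P_{V_T}\psi_\lam\|^2\ \ge\ A\dim V_T\ \gtrsim\ T^{d-1}.
\]
Conversely $\ft{\psi_\lam}(\zeta)=\int_B\sqrt{J(u)}\,e^{2\pi i(\dotprod{\lam'-\zeta}{u}+\lam_d\phi(u))}\,du$ is estimated by stationary phase: since $\nabla^2\phi$ is non‑degenerate, this integral has modulus $\lesssim|\lam_d|^{-(d-1)/2}$ and is rapidly decaying unless $(\zeta-\lam')/\lam_d$ lies, up to $O(1)$, in $\nabla\phi(B)$, i.e.\ unless $\zeta\in\mathcal Z_\lam:=\lam'+\lam_d\,\nabla\phi(B)$; here the curvature is essential, in that $\nabla\phi(B)$ is a genuine $(d-1)$‑dimensional set, so $\mathcal Z_\lam$ has diameter $\asymp|\lam_d|$ and volume $\asymp|\lam_d|^{d-1}$. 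Consequently
\[
 \|P_{V_T}\psi_\lam\|^2\ \lesssim\ \min\!\Bigl(1,\ |\lam_d|^{-(d-1)}\,\bigl|\mathcal Z_\lam\cap[-T,T]^{d-1}\bigr|\Bigr),
\]
with the right‑hand side negligibly small unless $|\lam'|\lesssim T+|\lam_d|$. Summing over $\lam$: the terms with $|\lam_d|\le T$ satisfy $\lam\in B(0,C_1T)$ and are each $\lesssim1$, so by the curve bound they contribute $\lesssim\#(\Lam\cap B(0,C_1T))\lesssim T$; the terms with $|\lam_d|\in[2^k,2^{k+1})$, $2^k>T$, satisfy $\lam\in B(0,C_12^k)$, number $\lesssim2^k$, and are each $\lesssim(T/2^k)^{d-1}$, contributing $\lesssim T^{d-1}\sum_{2^k>T}2^{-k(d-2)}\lesssim T$ since $d\ge3$; the remaining terms are negligible by the rapid‑decay tail together with $\#(\Lam\cap B(0,R))\lesssim R$. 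Hence $\sum_{\lam}\|P_{V_T}\psi_\lam\|^2\lesssim T$.

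Combining the two displays gives $T^{d-1}\lesssim T$, impossible for $d\ge3$ and $T$ large; therefore $L^2(\mu)$ has no Fourier frame. The crux — and the step I expect to demand the most care — is the stationary‑phase estimate for $\|P_{V_T}\psi_\lam\|^2$, which must hold uniformly over all $\lam\in\Lam$ simultaneously; this is exactly where the non‑degeneracy of the Hessian of $\phi$, that is the non‑zero Gaussian curvature, is used. For an affine $\phi$ the set $\mathcal Z_\lam$ collapses to a point, and, as the example behind \thmref{thm_A2.1} shows, no contradiction can arise; the Landau‑type lower bound for $\dim V_T$ and the properties of the concentration operator are standard.
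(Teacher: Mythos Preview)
Your overall strategy is sound but differs substantially from the paper's, and it contains a technical gap at exactly the point you yourself flag as delicate. In the paper, \thmref{thm_A2.2} is the case $k=1$ of \thmref{thm_C3.2}, which in turn is an application of the abstract pure-type principle \thmref{thm_C3.1}. That principle rests on two short lemmas. \lemref{lem_C2.1} yields your curve bound $\#(\Lambda\cap B(x,r))\le Cr^{\alpha}$ from the Bessel inequality, but by integrating $\sum_{\lambda}|\hat\mu(t-\lambda)|^2\le C$ over a ball rather than by your short-arc test function. \lemref{lem_C2.2} replaces your entire concentration-operator argument: one tests the lower frame inequality against a \emph{single} function $\overline{\varphi}\,e_{t_0}$, where $\varphi$ is a \emph{smooth} cutoff supported near a curved point of the hypersurface, and $t_0$ is chosen (via a simple volume count) to lie in a large ``gap'' of $\Lambda$; the pointwise decay $|(\varphi\mu)^{\wedge}(t)|\le C|t|^{-(d-1)/2}$ then forces $\sum_{\lambda}|(\varphi\mu)^{\wedge}(t_0-\lambda)|^2\to 0$, contradicting the frame lower bound. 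With $\alpha=1$ and $\beta=d-1$ the numerical condition $1/\alpha-1/\beta>1/d$ is exactly $d\ge 3$.

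Your route, summing the lower frame inequality over an orthonormal basis of the Landau subspace $V_T$, extracts more from the hypersurface: if the bookkeeping goes through, the same argument with a $k$-manifold in place of the curve gives a contradiction whenever $k\le d-2$, sharper than the paper's threshold $k\le\lfloor(d-1)/2\rfloor$. But the stationary-phase step is not justified as written. Your amplitude is $\sqrt{J}\,\1_B$, with a \emph{sharp} cutoff at $\partial B$; the textbook stationary-phase and non-stationary-phase lemmas require smooth compactly supported amplitudes, and here the integrations by parts produce boundary terms over $\partial B$ that neither vanish nor are obviously dominated by $|\lambda_d|^{-(d-1)/2}$ uniformly in $(\lambda,\zeta)$, nor yield the claimed rapid decay away from $\mathcal Z_\lambda$. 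The localization ``negligibly small unless $|\lambda'|\lesssim T+|\lambda_d|$'' depends on precisely this rapid decay, and without it the dyadic sum over $\lambda$ is not controlled. (The paper sidesteps the issue by building the smooth cutoff $\varphi$ into condition \eqref{eq_C2.1.7}.) A repair is plausible---for instance, insert a fixed $\chi\in C_c^\infty(B)$ so that the oscillatory integrals carry the smooth amplitude $\chi\sqrt{J}$---but you then owe an argument that the two facts you use, $\dim V_T\gtrsim T^{d-1}$ and $\|P_{V_T}\psi\|^2\le 2\int_{[-T,T]^{d-1}}|\hat\psi|^2$, survive the modification. Until that is done, the sketch is an interesting and potentially stronger alternative, but not yet a proof.
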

It is instructive to notice the contrast between the curvature
 requirement in this result and the ``flatness'' of the
 hypersurface in the example of \thmref{thm_A2.1}. We will deduce
 \thmref{thm_A2.2}  from a more general result  (\thmref{thm_C3.1}) 
that establishes a certain pure type phenomenon within the class of  singular continuous measures.

\subsection{}
In order to interpret \thmref{thm_A2.2} as a genuine ``pure type principle'', it is
desirable  to verify that the assumptions on the measure $\mu$ in the theorem nevertheless
permit the existence of a Fourier frame for each  ``pure'' component of the measure separately. 
This would mean that the conclusion that $\mu$ admits no Fourier frame is really due to the 
combination of these components together in the measure $\mu$.
\par
That this is indeed the case can be seen from the following result, which provides many examples of
 ``single dimensional'' measures which do admit a Fourier frame: 
 \begin{theorem}
	 \label{thm_A2.3}
	 Let $\mu$ be a measure on $\R^d$, which is the
	 $k$-dimensional area measure on a compact subset of the
	 graph $\{ (x,\varphi(x)) :  x\in \R^k \}$ of a smooth
		 function $\varphi: \R^k\to \R^{d-k}$ $(1\le k\le
		 d-1)$. Then $L^2(\mu)$ has a Fourier frame. 
 \end{theorem}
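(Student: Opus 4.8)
The plan is to use the graph structure to reduce everything to a one‑line Parseval computation over a cube. Let $\Phi\colon\R^{k}\to\R^{d}$ be the graph map $\Phi(x)=(x,\varphi(x))$. Since $\Phi$ is a homeomorphism of $\R^{k}$ onto the graph $G=\{(x,\varphi(x)):x\in\R^{k}\}$, the compact set $S\subseteq G$ carrying $\mu$ has the form $S=\Phi(E)$ for the compact set $E=\Phi^{-1}(S)\subseteq\R^{k}$, and the area formula for the smooth injective map $\Phi$ gives
\[
 \mu=\Phi_{*}\bigl(J\cdot\1_{E}\,dm_{k}\bigr),\qquad
 J(x)=\sqrt{\det\bigl(I_{k}+D\varphi(x)^{\top}D\varphi(x)\bigr)},
\]
where $m_{k}$ denotes Lebesgue measure on $\R^{k}$. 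Because $D\varphi(x)^{\top}D\varphi(x)$ is positive semidefinite we have $J\ge 1$ everywhere, and since $\varphi$ is smooth, $J$ is continuous, hence bounded above on every compact set. (If $m_{k}(E)=0$ then $\mu$ is the zero measure and there is nothing to prove, so we may assume $m_{k}(E)>0$.)

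Next I would translate coordinates so that $E$ is contained in a half‑open cube $Q=[0,L)^{k}$, and take as frequency set
\[
 \Lambda=\tfrac1L\Z^{k}\times\{0\}^{d-k}\subset\R^{k}\times\R^{d-k}=\R^{d}.
\]
The role of the vanishing last block of coordinates is that for $\lambda=(\lambda',0)\in\Lambda$ the restriction of $e_{\lambda}$ to the graph depends on $x$ only and equals $e^{2\pi i\dotprod{\lambda'}{x}}$. Writing $g=(f\circ\Phi)\,J^{1/2}\,\1_{E}$, so that $g\in L^{2}(Q)$ and $\|g\|_{L^{2}(\R^{k})}^{2}=\int_{E}|f\circ\Phi|^{2}J\,dm_{k}=\|f\|^{2}_{L^{2}(\mu)}$, one computes
\[
 \dotprod{f}{e_{\lambda}}_{L^{2}(\mu)}
 =\int_{E}f(\Phi(x))\,e^{-2\pi i\dotprod{\lambda'}{x}}J(x)\,dx
 =\widehat{\bigl(gJ^{1/2}\bigr)}(\lambda'),
\]
the Fourier transform on the right being that of a function supported in $Q$. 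Since $\{L^{-k/2}e^{2\pi i\dotprod{n/L}{x}}\}_{n\in\Z^{k}}$ is an orthonormal basis of $L^{2}(Q)$, Parseval's identity yields
\[
 \sum_{\lambda\in\Lambda}\bigl|\dotprod{f}{e_{\lambda}}_{L^{2}(\mu)}\bigr|^{2}
 =L^{k}\int_{Q}|g|^{2}\,J\,dm_{k}.
\]
Finally the two‑sided bound $1\le J\le C:=\max_{Q}J$ on $Q$ gives $L^{k}\|g\|_{L^{2}}^{2}\le\sum_{\lambda}|\dotprod{f}{e_{\lambda}}|^{2}\le CL^{k}\|g\|_{L^{2}}^{2}$, and combined with $\|g\|_{L^{2}}=\|f\|_{L^{2}(\mu)}$ this is exactly the frame inequality, with constants $A=L^{k}$ and $B=CL^{k}$.

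There is essentially no hard step in this argument: the only external inputs are the area formula — used to realize $\mu$ as the push‑forward of an absolutely continuous measure whose density is pinched between two positive constants on a set of finite measure — and Parseval's identity for the exponential orthonormal basis of a cube. The one point requiring a little care is to fix precisely what ``$k$-dimensional area measure on a compact subset of the graph'' means and to verify that it coincides with $\Phi_{*}(J\1_{E}m_{k})$; once that is settled, the computation above finishes the proof. It is worth emphasizing that the frame uses only frequencies lying in the ``graph directions'' $\R^{k}\times\{0\}^{d-k}$, which is precisely why the geometry of $\varphi$ (its flatness or its curvature) is irrelevant here — in sharp contrast with \thmref{thm_A2.2}.
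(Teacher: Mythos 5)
Your proposal is correct and is essentially the paper's own argument (Theorem \ref{thm_D1.1}): frequencies are taken in the lattice directions $\frac1L\Z^k\times\{0\}^{d-k}$, the frame inequality is reduced via Parseval for the exponential basis of a cube containing $E$, and the two-sided bound follows because the Jacobian weight (the paper's $w$) is continuous and strictly positive, hence pinched between positive constants on the compact set $E$. The only cosmetic difference is that you absorb the weight into $g=(f\circ\Phi)J^{1/2}\1_E$ rather than comparing $\int|f\circ\Phi|^2w^2$ with $\int|f\circ\Phi|^2w$ directly, which changes nothing of substance.
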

 This result has a quite straightforward proof (see \secref{sec_D1}), but its main point is to clarify
 that the assumptions in \thmref{thm_A2.2} indeed do not prevent the
 existence of a Fourier frame for each  ``pure'' component
 of the measure separately.


\section{Mixed type measures with a Fourier frame}
\label{sec_B1}
In  this section we present a method for constructing examples of
measure which have components of different dimensions, but
nevertheless have a Fourier frame. As a special case we will obtain
\thmref{thm_A2.1}.
\subsection{}
Let $\mu$ be a measure on $\R^n$, and $\nu$ be a measure on $\R^m$. We
assume that both measures are positive and finite on their respective
spaces. From the two measures $\mu, \nu$ one can construct a new
measure $\rho$ on $\R^{n+m}=\R^n\times\R^m$, defined by 
\begin{equation}
	\label{eq_B2.1}
	\rho = \mu\times \delta_0+\delta_0\times \nu,
\end{equation}
where $\delta_0$ denotes the Dirac measure at the origin.
Equivalently, the measure $\rho$ may be defined by the requirement
that 
\[
	\int_{\R^n\times\R^m} f(x,y) d\rho(x,y) =
	\int_{\R^n}f(x,0)d\mu(x)+\int_{\R^m}f(0,y)d\nu(y)
\]
for every continuous, compactly supported function $f$ on $\R^n\times \R^m$.
\par
Notice that $\rho$ is a singular measure, whose support is contained in the union
of the two proper subspaces $\R^n \times \{0\}^m$ and $\{0\}^n \times \R^m$
of  $\R^n\times \R^m$.
\begin{theorem}
	\label{thm_B2.1}
	Assume that $\mu,\nu$ are continuous measures, and that each
	one of them has a Fourier frame. Then also the measure
	$\rho$ given by \eqref{eq_B2.1} has a Fourier frame.
\end{theorem}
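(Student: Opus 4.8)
The plan is to exploit the orthogonal splitting $L^2(\rho)=L^2(\mu)\oplus L^2(\nu)$ and then to assemble a Fourier frame for $\rho$ out of \emph{highly redundant} Fourier frames for $\mu$ and $\nu$. Since $\mu$ and $\nu$ are continuous, the single point $(0,0)$ carries no $\rho$-mass, so $\R^n\times\{0\}^m$ and $\{0\}^n\times\R^m$ are, up to a $\rho$-null set, disjoint Borel sets whose union supports $\rho$, and $\rho$ restricts to an isometric copy of $\mu$ on the first and of $\nu$ on the second. Consequently $L^2(\rho)$ is the orthogonal direct sum of copies of $L^2(\mu)$ and $L^2(\nu)$: writing $f\in L^2(\rho)$ as the pair $(g,h)$ with $g(x)=f(x,0)$ and $h(y)=f(0,y)$, one has $\|f\|_{L^2(\rho)}^2=\|g\|_{L^2(\mu)}^2+\|h\|_{L^2(\nu)}^2$ and, for $\lambda\in\R^n$, $\tau\in\R^m$,
\[
\dotprod{f}{e_{(\lambda,\tau)}}_{L^2(\rho)}=\dotprod{g}{e_\lambda}_{L^2(\mu)}+\dotprod{h}{e_\tau}_{L^2(\nu)}.
\]
Thus a Fourier frame for $L^2(\rho)$ is exactly a countable $S\subset\R^n\times\R^m$ for which $\sum_{(\lambda,\tau)\in S}\big|\dotprod{g}{e_\lambda}_{L^2(\mu)}+\dotprod{h}{e_\tau}_{L^2(\nu)}\big|^2\asymp\|g\|_{L^2(\mu)}^2+\|h\|_{L^2(\nu)}^2$, uniformly over $(g,h)$.

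To construct $S$, fix Fourier frames $E(\Lambda_0)$ for $L^2(\mu)$ and $E(M_0)$ for $L^2(\nu)$, with bounds $A_0\le B_0$ and $A_1\le B_1$; continuity forces $L^2(\mu)$ and $L^2(\nu)$ to be infinite-dimensional, so $\Lambda_0$ and $M_0$ are countably infinite. Fix a large integer $N$ (to be specified). Let $S_1$ be obtained by listing each element of $\Lambda_0$ exactly $N$ times and pairing the resulting list injectively with a subset of $M_0$ — possible since $M_0$ is infinite — so that the first-coordinate projection of $S_1$ is $\Lambda_0$ taken with multiplicity $N$ (a Fourier frame for $L^2(\mu)$ with lower bound $NA_0$), while the second-coordinate projection is an injective subfamily of $E(M_0)$, hence Bessel in $L^2(\nu)$ with bound $\le B_1$. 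Symmetrically, let $S_2$ be obtained by listing each element of $M_0$ exactly $N$ times and pairing it with distinct elements of $\Lambda_0$, so that the second-coordinate projection of $S_2$ is $M_0$ with multiplicity $N$ and the first-coordinate projection is Bessel in $L^2(\mu)$ with bound $\le B_0$. We may arrange that $S_1$ and $S_2$ are disjoint and set $S=S_1\cup S_2$.

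The upper frame bound for $E(S)$ follows at once from $|a+b|^2\le 2|a|^2+2|b|^2$ and the Bessel/frame upper bounds above. For the lower bound the crucial elementary estimate is $|a+b|^2\ge\tfrac12|a|^2-|b|^2$: applying it termwise on $S_1$, and using that the first coordinates run over a frame with lower bound $NA_0$ while the second coordinates form a Bessel family with bound $\le B_1$, gives
\[
\sum_{(\lambda,\tau)\in S_1}\big|\dotprod{g}{e_\lambda}_{L^2(\mu)}+\dotprod{h}{e_\tau}_{L^2(\nu)}\big|^2\ge\tfrac{N A_0}{2}\,\|g\|_{L^2(\mu)}^2-B_1\|h\|_{L^2(\nu)}^2,
\]
and symmetrically the sum over $S_2$ is $\ge\tfrac{NA_1}{2}\|h\|_{L^2(\nu)}^2-B_0\|g\|_{L^2(\mu)}^2$. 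Adding the two (here disjointness of $S_1,S_2$ matters) and taking $N$ large enough that $NA_0>2B_0$ and $NA_1>2B_1$ produces a strictly positive lower bound, proportional to $\|g\|_{L^2(\mu)}^2+\|h\|_{L^2(\nu)}^2=\|f\|_{L^2(\rho)}^2$. Hence $E(S)$ is a Fourier frame for $L^2(\rho)$.

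I expect the only genuine conceptual point to be seeing why the redundancy parameter $N$ is needed. A first attempt would pair a Fourier frame for $L^2(\mu)$ with an exponential family for $L^2(\nu)$ having \emph{small} Bessel bound, hoping the latter contribution to be negligible; but this is impossible, since any nonempty family of exponentials in $L^2(\nu)$ has Bessel bound at least $\nu(\R^m)>0$, so the interference term $B_1\|h\|^2$ above cannot be made small and need not be dominated by a given frame's lower bound. Passing to $N$ copies multiplies the frame lower bound by $N$ while leaving the interference term fixed, which is precisely what lets the $S_1$- and $S_2$-contributions absorb each other's cross terms. Continuity is used in exactly two places: to get the orthogonal decomposition $L^2(\rho)=L^2(\mu)\oplus L^2(\nu)$ (the overlap point $(0,0)$ must be $\rho$-null), and to ensure $\Lambda_0,M_0$ are infinite so that the required injective pairings exist.
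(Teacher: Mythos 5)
Your proposal is correct and is essentially the paper's own argument: your sets $S_1,S_2$ (each frame element repeated $N$ times and paired injectively with fresh elements of the other frame) are exactly the paper's $\Lambda_\mu,\Lambda_\nu$ built from the disjoint $q$-element sets $F(u)$, $G(v)$ with $q>2B/A$, and the frame bounds are obtained from the same inequalities $|a+b|^2\ge\tfrac12|a|^2-|b|^2$ and $|a+b|^2\le 2(|a|^2+|b|^2)$ together with the orthogonal decomposition $\|f\|^2_{L^2(\rho)}=\|g\|^2_{L^2(\mu)}+\|h\|^2_{L^2(\nu)}$. No further comments are needed.
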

For example, if we take $\mu$ (respectively $\nu$) to be the Lebesgue
measure on the segment $[0,1]$ (respectively on the cube
$[0,1]^{d-1}$) then the corresponding measure $\rho$ is the sum of the
arc length measure on the curve $[0,1]\times \{0\}^{d-1}$ and the area
measure on the hypersurface $\{0\}\times [0,1]^{d-1}$. Since in this
case both measures $\mu$ and $\nu$ have a Fourier frame (and,
in fact, even an orthonormal basis of exponentials), it follows from 
\thmref{thm_B2.1} that also $\rho$ has a Fourier
frame. Thus we obtain \thmref{thm_A2.1} above.
\par
 Generally speaking, the dimensions of the
components $\mu\times \delta_0$ and $\delta_0 \times \nu$ of the measure
$\rho$ in $\R^n \times \R^m$ (according to any reasonable definition of ``dimension'')
coincide with the respective dimensions of $\mu$ in $\R^n$ and $\nu$ in $\R^m$. Hence
\thmref{thm_B2.1} in fact provides a  way to construct many examples 
(including fractal ones) of ``mixed type'' measures which nevertheless do admit a Fourier frame.

\subsection{}
We now give the proof of \thmref{thm_B2.1}.

\begin{proof}[Proof of \thmref{thm_B2.1}]
We assume that $\mu, \nu$ are continuous measures (that is, both have no discrete part)
and that each one of them has a
Fourier frame. Let $U\subset \R^n$, $V\subset \R^m$ be countable sets
such that the exponential system $E(U)$ is a frame in $L^2(\mu)$, and
$E(V)$ a frame in $L^2(\nu)$. Hence there exist constants $0<A,B<\infty$ such that 
\begin{equation}
	\label{eq_B1.1.3}
	A  \|g\|^2_{L^2(\mu)}\le \sum_{u\in U}
	|\dotprod{g}{e_{u}}_{L^2(\mu)}|^2 \le B\|g\|^2_{L^2(\mu)}
\end{equation}
and 
\begin{equation}
	\label{eq_B1.1.4}
	A\|h\|^2_{L^2(\nu)}\le \sum_{v\in V}|\dotprod{h}{e_v}_{L^2(\nu)}|^2\le
	B\|h\|^2_{L^2(\nu)} 
\end{equation}
for every $g\in L^2(\mu)$, $h\in L^2(\nu)$.
\par
 Fix a positive integer
$q$ satisfying 
\begin{equation}
	\label{eq_B1.1.5}
	q > \frac{2B}{A}.
\end{equation}
For each $u\in U$, choose a subset $F(u)$ of $V$ consisting of exactly
$q$ elements, in such a way that the sets $F(u)$, $u \in U$, are
disjoint subsets of $V$. This is possible since $\nu$ is a continuous
measure, hence $L^2(\nu)$ is an infinite dimensional space and so
$V$ must be an infinite set. In a completely symmetric way we may
choose, for each $v\in V$, a subset $G(v)$ of $U$ consisting of
exactly $q$ elements, and such that the sets $G(v)$, $v\in V$,
are disjoint subsets of $U$. 
Now define 
\[ \Lambda_\mu := \left\{ (u,v)\, : \, u\in U, \, v\in
	F(u) \right\}, \quad \Lambda_\nu := \left\{ (u,v)\, : \, 
v\in V, \, u\in G(v)\right\}. \]
It would be convenient to know that $\Lambda_{\mu}$ and $\Lambda_{\nu}$ are
disjoint subsets of $U\times V$, which we may assume with no loss of generality
by an appropriate choice of the sets $F(u)$, $G(v)$.
\par
Finally, let
\[ \Lambda := \Lambda_\mu \cup \Lambda_\nu. \] 
We claim that the exponential system $E(\Lambda)$ is a frame in $L^2(\rho)$.
\par
 Indeed, let $f\in
L^2(\rho)$. Then $f$ has a unique decomposition as
\[ f(x,y)d\rho = g(x)d(\mu\times\delta_0)+ h(y)d(\delta_0\times\nu), \]
where $g\in L^2(\mu)$, $h\in L^2(\nu)$. This is due to the definition 
\eqref{eq_B2.1} of the measure $\rho$ (the uniqueness of this
decomposition is true since $\mu, \nu$ have no atom at the origin).
Notice that for any $(u,v)\in \R^n\times \R^m$ we have 
\[
	\dotprod{f}{e_{(u,v)}}_{L^2(\rho)}=\dotprod{g}{e_u}_{L^2(\mu)}+
\dotprod{h}{e_v}_{L^2(\nu)}. \]
\par
For any two complex numbers $a,b$ we have 
\begin{equation}
	\label{eq_B1.1.2}
	|a+b|^2 \ge \tfrac{1}{2}|a|^2-|b|^2, 
\end{equation}
hence 
\[
\begin{aligned}
		\sum\limits_{\lambda\in\Lambda_\mu}
		|\dotprod{f}{e_\lambda}|^2 &= \sum\limits_{u\in
		U}\sum\limits_{v\in F(u)}
		|\dotprod{g}{e_u}+\dotprod{h}{e_v}|^2 
		\ge \sum\limits_{u\in U}\sum\limits_{v\in F(u)} \left(
			\tfrac{1}{2} |\dotprod{g}{e_u}|^2-
		|\dotprod{h}{e_v}|^2
	\right). 
\end{aligned}
\]
Using the fact that the sets $F(u)$ are disjoint subsets of
$V$ with exactly $q$ elements each, this implies that 
\[
\begin{aligned}
		\sum\limits_{\lambda\in \Lambda_\mu}
		|\dotprod{f}{e_\lambda}|^2 &\ge 
		\frac{q}{2}\sum\limits_{u\in U}|\dotprod{g}{e_u}|^2 - 
		\sum\limits_{v\in V} |\dotprod{h}{e_v}|^2  
		\ge \frac{q}{2} A\|g\|^2_{L^2(\mu)} -
		B\|h\|^2_{L^2(\nu)}, 
\end{aligned}
\]
where we have used \eqref{eq_B1.1.3} and \eqref{eq_B1.1.4} in the last
inequality. In a completely symmetric way one can also show that 
\[ 
		\sum\limits_{\lambda\in\Lambda_\nu}
		|\dotprod{f}{e_\lambda}|^2 \ge 
		\frac{q}{2}A \|h\|^2_{L^2(\nu)} -
		B\|g\|^2_{L^2(\mu)}.
\]
Summing the last two inequalities (and using the assumption that
$\Lambda_\mu$ and $\Lambda_\nu$ are disjoint sets) we get
\[
\begin{aligned}
		\sum\limits_{\lambda\in\Lambda}
		|\dotprod{f}{e_\lambda}|^2 &=
		\sum\limits_{\lambda\in\Lambda_\mu} |\dotprod{f}{e_\lambda}|^2 +
		\sum\limits_{\lambda\in\Lambda_\nu} |\dotprod{f}{e_\lambda}|^2\\
		&\ge \left( \frac{q}{2}A-B \right) \left(
			\|g\|^2_{L^2(\mu)}+\|h\|^2_{L^2(\nu)}
		\right) 
		= \left( \frac{q}{2}A-B \right)
		\|f\|^2_{L^2(\rho)}.
\end{aligned}
\]
Due to \eqref{eq_B1.1.5} this provides a lower frame
bound for the system $E(\Lambda)$ in $L^2(\rho)$.
\par
 The upper frame
bound can be obtained in a similar way, by using the inequality 
\[ |a+b|^2\le 2 \left( |a|^2+|b|^2 \right) \]
instead of \eqref{eq_B1.1.2} in the estimates above. Indeed, we have 
\[
\begin{aligned}
		\sum\limits_{\lambda\in\Lambda_\mu}
		|\dotprod{f}{e_\lambda}|^2 &= 
		\sum\limits_{u\in U}\sum\limits_{v\in F(u)}
		|\dotprod{g}{e_u}+\dotprod{h}{e_v}|^2 
		\le 2\sum\limits_{u\in U}\sum\limits_{v\in F(u)}
		\left( |\dotprod{g}{e_u}|^2+|\dotprod{h}{e_v}|^2
		\right) \\
		&\le 2q \sum\limits_{u\in U} |\dotprod{g}{e_u}|^2 +
		2\sum\limits_{v\in V}|\dotprod{h}{e_v}|^2 
		\le  2qB\|g\|^2_{L^2(\mu)} +
		2B\|h\|^2_{L^2(\nu)}.
\end{aligned}
\]
Similarly one can verify that 
\[ \sum\limits_{\lambda\in \Lambda_\nu} |\dotprod{f}{e_\lambda}|^2 \le 
2q B\|h\|^2_{L^2(\nu)} + 2B\|g\|^2_{L^2(\mu)}, \]
and summing the last two inequalities we get the estimate from above 
\[ \sum\limits_{\lambda\in\Lambda}|\dotprod{f}{e_\lambda}|^2 \le
2(q+1)B \|f\|_{L^2(\rho)},\]
which confirms  that the exponential system $E(\Lambda)$ is indeed a frame
for $L^2(\rho)$. 
\end{proof}

\subsection{} 
Although  \thmref{thm_B2.1} allows us to construct many examples of
``mixed type'' measures with a Fourier frame, there exist certain
limitations on the possible dimensions of the components of the
measure  in \eqref{eq_B2.1}. For instance, the theorem does not tell
us whether the measure in $\R^4$ which is the sum of the
$2$-dimensional area measure on $[0,1]^2\times \{0\}^2$ and the
$3$-dimensional area measure on $\{0\}\times [0,1]^3$ has a Fourier
frame. 
\par
However, more general examples are easy to construct by taking the
product $\rho\times \sigma$ of the measure $\rho$ in
\eqref{eq_B2.1} with any measure $\sigma$ in $\R^l$ $(l \ge 1)$
such that also $\sigma$ has a Fourier frame. For it is a general fact
that if $E(\Lambda)$ is a frame in $L^2(\rho)$ and $E(\Gamma)$ a frame
in $L^2(\sigma)$, then $E(\Lambda\times \Gamma)$ is a frame in
$L^2(\rho\times\sigma)$, hence $\rho\times\sigma$ also has a Fourier
frame. 
\par
For example, in this way one can obtain the following extension of
\thmref{thm_A2.1} above: 
\begin{theorem}
	\label{thm_B2.2}
	Consider the measure on $\R^d$ which is the sum of the
	$k$-dimensional area measure on $[0,1]^k\times\{0\}^{d-k}$,
	and the $j$-dimensional area measure on
	$\{0\}^{d-j}\times[0,1]^j$ $(1\le j, k\le d-1)$. Then this
	measure has a Fourier frame. 
\end{theorem}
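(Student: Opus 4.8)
The plan is to reduce \thmref{thm_B2.2} to \thmref{thm_B2.1} together with the two stability facts already recalled above: that the product of two measures, each carrying a Fourier frame, again carries one, and that every discrete measure with finitely many atoms has a Fourier frame --- in particular the Dirac mass $\delta_0$, and likewise (with an orthonormal basis, even) Lebesgue measure on a cube. Since the property of admitting a Fourier frame is obviously unaffected by a permutation of the coordinates of $\R^d$, it suffices to display the measure in the theorem, after reordering coordinates, as a product of measures of these elementary types together with one measure of the form \eqref{eq_B2.1}.

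To do this I would first sort the $d$ coordinates according to the role they play for the two cubes $[0,1]^k\times\{0\}^{d-k}$ and $\{0\}^{d-j}\times[0,1]^j$: the first $\min(k,d-j)$ coordinates are free for the first cube and pinned to $0$ for the second; the last $\min(j,d-k)$ coordinates are free for the second and pinned to $0$ for the first; and the remaining ``middle'' coordinates are either free for both cubes --- which happens exactly when $j+k>d$, and then there are $r:=j+k-d$ of them --- or pinned to $0$ in both --- which happens when $j+k<d$, and then there are $s:=d-j-k$ of them (and there is no middle block when $j+k=d$). In particular at most one of $r,s$ is positive. A direct inspection of the two cubes then shows that, up to the coordinate permutation moving the middle block to the last coordinates, the measure of the theorem equals
\[
	\rho \times \bigl(\text{Lebesgue measure on }[0,1]^{r}\bigr) \times \delta_0,
\]
where the last factor $\delta_0$ denotes the Dirac mass at the origin of $\R^{s}$, and where
\[
	\rho \;=\; \bigl(\text{Lebesgue on }[0,1]^{k-r}\bigr)\times\delta_0 \;+\; \delta_0\times\bigl(\text{Lebesgue on }[0,1]^{j-r}\bigr)
\]
is a measure on $\R^{k-r}\times\R^{j-r}$ of the form \eqref{eq_B2.1}; here a factor involving a zero-dimensional cube is understood to be omitted.

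It then remains to check that each ingredient carries a Fourier frame. The hypothesis $1\le j,k\le d-1$ yields $k-r\ge 1$ and $j-r\ge 1$ (these quantities equal $k,j$ when $r=0$ and equal $d-j,d-k$ when $r>0$), so the two components of $\rho$ are genuine continuous measures on Euclidean spaces of dimension at least $1$, each carrying a Fourier frame; hence $\rho$ carries a Fourier frame by \thmref{thm_B2.1}. Lebesgue measure on $[0,1]^r$ and the point mass $\delta_0$ on $\R^s$ carry Fourier frames as well, so the displayed product carries one by the product fact, and therefore so does the measure of the theorem.

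There is no analytic difficulty here; the only real work is the coordinate bookkeeping behind the two displayed identities, and the single point worth watching is that the constraint $j,k\le d-1$ is exactly what keeps the reduction from degenerating --- it guarantees that neither component of $\rho$ is trivial or full-dimensional, which is precisely the regime in which \thmref{thm_B2.1} applies.
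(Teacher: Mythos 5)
Your proof is correct and takes essentially the same route as the paper: after a permutation of coordinates the measure is exhibited as the product of a measure of the form \eqref{eq_B2.1} (whose two Lebesgue components have dimensions $k-r\ge 1$ and $j-r\ge 1$, so \thmref{thm_B2.1} applies) with frame-admitting auxiliary factors, and one concludes by the product stability of Fourier frames. The only cosmetic difference is that the paper bundles your two auxiliary factors, Lebesgue measure on $[0,1]^r$ and $\delta_0$ on $\R^s$, into the single measure $\sigma$ given by the $p$-dimensional area measure on $[0,1]^p\times\{0\}^q$.
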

Indeed, the measure in this result is (after permutation of the
coordinates) of the form $\rho\times\sigma$, where $\rho$ is given by
\eqref{eq_B2.1} and $\mu$, $\nu$ and $\sigma$ are respectively the
$n$, $m$ and $p$-dimensional area measures on $[0,1]^n$, $[0,1]^m$ and 
$[0,1]^p\times \{0\}^q$ for appropriate values of $n$, $m$, $p$ and
$q$. Since in this case each one of $\mu$, $\nu$ and $\sigma$ has
a Fourier frame, \thmref{thm_B2.2} follows.
 (In particular, for $d=4$, $k=2$ and $j=3$ we obtain an affirmative
answer to the question mentioned above.)

\section{Pure type phenomena for singular continuous measures}
\label{sec_C2}

The results in the previous section contradict the existence of a general
pure type principle within the class of singular continuous measures.
However, our goal in  the present section is to demonstrate that such principles
can nevertheless be established under some extra assumptions on the components of the measure.
In particular we will prove \thmref{thm_A2.2} above.

\subsection{} 
Let $\mu$ be a positive, finite measure on $\R^d$. Given a real number
$\alpha$, $0\le \alpha\le d$, we consider the following condition:
\begin{equation}
	\label{eq_C2.1.1}
	\liminf_{r \to\infty}\frac{1}{r^{d-\alpha}}\int_{|t|<r}|\widehat{\mu}(t)|^2
	dt >0,
\end{equation}
where 
\[ \widehat{\mu}(t)= \int_{\R^d} e^{-2\pi
i\dotprod{t}{x} }d\mu(x), \quad t\in \R^d \]
is the Fourier transform of the measure $\mu$.
\par
 Intuitively, we think of condition \eqref{eq_C2.1.1} in some sense 
as saying that $\mu$ has
``at least one component of dimension not greater than $\alpha$''.
For example, if $\mu$ is an absolutely continuous measure 
with an $L^2$ density, then \eqref{eq_C2.1.1} is satisfied only for
$\alpha=d$, by the Plancherel identity. 
At the other extreme, by the
classical Wiener's lemma, the condition \eqref{eq_C2.1.1} holds with
$\alpha=0$ if and only if $\mu$ has at least one non-zero atom
(see \cite{Str90a}). 
\par
An example of a singular continuous measure
$\mu$ satisfying \eqref{eq_C2.1.1} for some $\alpha<d$ is given by the
area measure on an open subset of a smooth $k$-dimensional submanifold
in $\R^d$ ($1\le k\le d-1$). In this case the condition
\eqref{eq_C2.1.1} holds for $\alpha=k$ \cite{AH76, Str90a}. 
It was also proved in \cite{Str90a} that \eqref{eq_C2.1.1} is true if
$\mu$ is the $\alpha$-dimensional Hausdorff measure on certain
self-similar fractals of dimension $\alpha$ (in the
latter case $\alpha$ need not be an integer). See also \cite{Str90b, Str93a,
Str93b} for further results in this connection. 
\subsection{}
Recall that a system of vectors $\{f_n\}$ in a Hilbert space $H$ is
called a \define{Bessel system} if there is a positive constant
$C$ such that the inequality 
\begin{equation}
	\label{eq_C2.1.1b}
	\sum_n |\dotprod{f}{f_n}|^2\le C\|f\|^2
\end{equation}
(Bessel's inequality) is satisfied for every $f\in H$. 
\begin{lem}
	\label{lem_C2.1}
	Suppose that a system of exponentials $E(\Lambda)$ constitutes a
	Bessel system in $L^2(\mu)$, where $\mu$ is a measure
	satisfying \eqref{eq_C2.1.1} for some $\alpha$, $0\le \alpha\le d$. Then
	\begin{equation}
		\label{eq_C2.1.2}
		\sup\limits_{x\in\R^d}\#\big(\Lambda\cap B(x,r)\big)
		\le C r^\alpha \quad (r\ge 1)
	\end{equation}
	for a certain constant $C$ which does not depend
	on $r$. 
\end{lem}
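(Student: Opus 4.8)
The plan is to combine the Bessel bound for $E(\Lambda)$ with the lower bound on the Fourier transform of $\mu$ encoded in \eqref{eq_C2.1.1}, by testing the Bessel inequality on a well-chosen family of functions. The natural choice is a modulated bump: fix a nonnegative $\psi\in C_c^\infty(\R^d)$ with $\psi(0)>0$ and $\hat\psi$ supported in the unit ball, and for $t\in\R^d$ consider the function $f_t := e_t \cdot \varphi$ in $L^2(\mu)$, where $\varphi$ is a fixed function to be specified (one may simply take $\varphi\equiv 1$, so $f_t = e_t$, since $\mu$ is finite). Then $\dotprod{f_t}{e_\lambda}_{L^2(\mu)} = \hat\mu(\lambda - t)$, so Bessel's inequality \eqref{eq_C2.1.1b} reads
\[
	\sum_{\lambda\in\Lambda} |\hat\mu(\lambda-t)|^2 \le C\,\mu(\R^d)
	\qquad (t\in\R^d).
\]
This already looks promising, but the left-hand side is a sum over the discrete set $\Lambda$ of values of $|\hat\mu|^2$, whereas \eqref{eq_C2.1.1} controls the integral of $|\hat\mu|^2$ over a ball. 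To bridge the two I would average the displayed inequality over $t$ in a ball $B(x_0, R)$ and interchange the sum and the integral:
\[
	\sum_{\lambda\in\Lambda} \int_{B(x_0,R)} |\hat\mu(\lambda - t)|^2\, dt
	\le C\,\mu(\R^d)\cdot |B(x_0,R)| = C' R^d.
\]

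Now I would localize. Restrict the sum on the left to those $\lambda\in\Lambda\cap B(x_0,R/2)$; for each such $\lambda$, as $t$ ranges over $B(x_0,R)$ the point $\lambda - t$ ranges over a set containing $B(0, R/2)$, so
\[
	\int_{B(x_0,R)}|\hat\mu(\lambda-t)|^2\,dt \ge \int_{B(0,R/2)}|\hat\mu(s)|^2\,ds.
\]
By \eqref{eq_C2.1.1} there are constants $c>0$ and $r_0$ such that $\int_{|s|<r}|\hat\mu(s)|^2\,ds \ge c\, r^{d-\alpha}$ for all $r\ge r_0$; hence for $R\ge 2r_0$ each term in the restricted sum is at least $c\,(R/2)^{d-\alpha}$. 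Combining with the averaged Bessel bound gives
\[
	\#\big(\Lambda\cap B(x_0, R/2)\big)\cdot c\,(R/2)^{d-\alpha} \le C' R^d,
\]
i.e.\ $\#(\Lambda\cap B(x_0,R/2)) \le C'' R^{\alpha}$ uniformly in $x_0$, for $R\ge 2r_0$. Renaming $r = R/2$ yields \eqref{eq_C2.1.2} for all $r$ exceeding a fixed threshold, and for $1\le r\le r_0$ the bound follows by covering $B(x_0,r_0)$ by a bounded number of balls of the threshold radius (or simply absorbing it into the constant since $\Lambda\cap B(x_0,r_0)$ is finite with uniformly bounded cardinality — itself a consequence of the case $r$ just above threshold).

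The one point requiring a little care — and the place where a naive argument could go wrong — is the interchange of summation and integration and, more importantly, making sure the restricted sum really does capture $B(0,R/2)$ of the $\hat\mu$-mass for \emph{every} term: this forces the geometric bookkeeping $\lambda\in B(x_0,R/2)$, $t\in B(x_0,R)$ $\Rightarrow$ $|\lambda - t|<3R/2$ and $\{\,\lambda - t : t\in B(x_0,R)\,\}\supseteq B(\lambda - x_0, R)\supseteq B(0,R/2)$, which is where the factor-of-two juggling in the radii comes from. Everything else is routine: the measure is finite so $e_t\in L^2(\mu)$ with norm $\mu(\R^d)^{1/2}$ independent of $t$, Fubini applies because the integrand is nonnegative, and the liminf hypothesis is exactly what is needed to get a clean power of $r$ rather than just a subsequence. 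I expect no serious obstacle; the main subtlety is purely the choice of radii so that the localization and the lower bound \eqref{eq_C2.1.1} mesh correctly.
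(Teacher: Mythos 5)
Your proposal is correct and follows essentially the same route as the paper: apply Bessel's inequality to the exponentials $e_t$, integrate the resulting uniform bound over a large ball, interchange sum and integral, restrict to $\lambda$ in a concentric ball of half the radius so that each translated ball contains $B(0,R/2)$, and invoke \eqref{eq_C2.1.1} for the lower bound. The only differences are cosmetic (the paper integrates over $B(x,2r)$ and counts in $B(x,r)$, which is your argument with $R=2r$, and the unused bump function $\psi$ in your setup is superfluous).
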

Here and below $B(x,r)$ denotes the open ball of radius
$r$ centered at the point $x$.
\par
\lemref{lem_C2.1} should be compared to \cite[Theorem 3.5(a)]{DHSW11}
where the estimate \eqref{eq_C2.1.2} was proved for exponential
Bessel systems $E(\Lambda)$ on certain self-similar measures
$\mu$ of fractal dimension $\alpha$. For such measures which also satisfy
condition
\eqref{eq_C2.1.1} we obtain the latter result as a special case of 
\lemref{lem_C2.1}.

\begin{proof}[Proof of \lemref{lem_C2.1}]
	Assume that $E(\Lambda)$ is a Bessel system in $L^2(\mu)$.
	Then applying Bessel's inequality \eqref{eq_C2.1.1b} to the functions $e_t$
	($t\in\R^d$)  we obtain 
	\begin{equation}
		\label{eq_C2.1.3}
		\sum_{\lambda\in
		\Lambda}|\widehat{\mu}(t-\lambda)|^2\le C_1, \quad
		t\in \R^d
	\end{equation}
	for a certain constant $C_1$. Integrating this inequality over the ball $B(x,2r)$ yields 
\begin{equation}
	\label{eq_C2.1.5}
	\int_{B(x,2r)}\sum_{\lambda\in
	\Lambda}|\widehat{\mu}(t-\lambda)|^2 \, dt\le C_2 \, r^d.
\end{equation}
On the other hand, we have
\[
	\int_{B(x,2r)}\sum_{\lambda\in\Lambda}
	|\widehat{\mu}(t-\lambda)|^2 \, dt = \sum_{\lambda\in \Lambda}
	\int_{B(x-\lambda,2r)} |\widehat{\mu}(t)|^2 \,  dt 
	\ge \sum_{\lambda\in \Lambda\cap B(x,r)}
	\int_{B(x-\lambda,2r)} |\widehat{\mu}(t)|^2 \, dt.
\]
Notice that for each $\lambda\in \Lambda\cap B(x,r)$, the ball 
$B(x-\lambda,2r)$ contains $B(0,r)$. Hence
\[
	\int_{B(x,2r)}\sum_{\lambda\in\Lambda}
	|\widehat{\mu}(t-\lambda)|^2 \, dt 
	\ge \#\big(\Lambda\cap B(x,r)\big)
	\int_{|t|<r}|\widehat{\mu}(t)|^2 \, dt.
\]
Due to the assumption \eqref{eq_C2.1.1}, the integral on the right
hand side of the last inequality is not less than  $C_3 \,
r^{d-\alpha}$ for all sufficiently large $r$, for an appropriate constant
$C_3>0$. Combining this with \eqref{eq_C2.1.5}
we obtain the assertion of the lemma. 
\end{proof}

\subsection{}
Given a set $\Lam \subset \R^d$, one defines its \define{upper
Beurling dimension} to be the infimum  of the numbers $\alpha$ 
for which there exists a constant $C$ such that \eqref{eq_C2.1.2} holds
(see \cite{CKS08}). \lemref{lem_C2.1} thus shows
that if $E(\Lambda)$ is a Bessel system for a measure $\mu$ 
satisfying the condition \eqref{eq_C2.1.1} for some $\alpha$, then
the upper Beurling dimension of $\Lambda$ cannot exceed
$\alpha$. 
\par
If the exponential system $E(\Lambda)$ is not just a Bessel system,
but moreover is a frame in $L^2(\mu)$ for some ``$\alpha$-dimensional'' 
measure $\mu$, then in spirit of the classical Landau's results \cite{Lan67}
one might expect that the upper Beurling
dimension of $\Lambda$ should in fact be equal to $\alpha$.
A result of this type was proved in  \cite[Theorem 3.5(b)]{DHSW11},
where the authors showed that for certain
self-similar measures $\mu$, if $E(\Lambda)$ is a frame in
$L^2(\mu)$ and if the set $\Lambda$ is assumed to enjoy a certain
structure, then the upper Beurling dimension of $\Lambda$ indeed
coincides with the fractal dimension of the measure. 
\par
 However, in general this is not the case. As a simple example one may
 take $\mu$ to be the arc length measure on the segment $[0,1] \times \{0\}$ in
$\R^2$. Then $\mu$ is a one-dimensional measure, and
it is easy to verify that if
$ \Lambda =  \{ (n,2^{|n|}) :  n\in \Z  \} $
then the system $E(\Lambda)$ constitutes an orthonormal basis in $L^2(\mu)$.
Nevertheless,  the estimate 
\[ \sup_{x\in\R^2} \#\big(\Lambda\cap B(x,r)\big) \le C  \log r\]
holds for all sufficiently large $r$, and in particular, the upper
Beurling dimension of $\Lambda$ is zero. In a similar way one can
construct examples of this type in $\R^d$ for any $d\ge 2$.
See also \cite{DHL13} where such an example for fractal measures
on $\R$ was given.

\subsection{}
On the other hand, if we impose extra assumptions on the measure $\mu$,
then we can show that if $E(\Lambda)$ is a frame in $L^2(\mu)$ then
\eqref{eq_C2.1.2} cannot hold for arbitrarily small $\alpha$.
\par
 For a real number $\beta$, $0<\beta\le d$, consider the following condition on the
measure $\mu$:
\begin{equation}
\label{eq_C2.1.7}
\begin{gathered}
\text{there exists a function $\varphi\in L^2(\mu)$, $\|\varphi\|_{L^2(\mu)}>0$, such that} \\
|(\varphi   \mu)^{\land}   (t)| \le C \, 	|t|^{-\beta/2}, \quad  t\in \R^{d}
\end{gathered}
\end{equation}
for some positive constant $C$. Notice that $\varphi   \mu$ is a
(non-zero) finite, complex measure on $\R^d$, since $\varphi \in L^2(\mu)$ and $\mu$ is a
finite measure. 
\par
It is known  that (at least, for non-negative $\varphi$) the estimate in \eqref{eq_C2.1.7} implies that
the measure $\varphi  \mu$ cannot charge any compact set of
Hausdorff dimension smaller than $\beta$ (see for example \cite[Sections 2.5, 3.5, 3.6]{Mat15}).
This means, intuitively, that $\mu$ has ``at least one
component of dimension not less than $\beta$''.
\par
 A well-known example of
a singular measure $\mu$ satisfying \eqref{eq_C2.1.7} for some
$\beta>0$ is the area measure on the unit sphere in $\R^d$,
or, more generally, on an open subset $\Omega$ of a smooth hypersurface
in $\R^d$ with a point of non-zero Gaussian curvature.
In this case one may take $\varphi$ to be any smooth
function supported by a sufficiently small neighborhood of a point $x_0 \in \Omega$
where the Gaussian curvature does not vanish, and the estimate in \eqref{eq_C2.1.7} then holds with
$\beta=d-1$ (see e.g.\ \cite[Section VIII.3]{Ste93}).
\par
It should be mentioned that the theory of \define{Salem sets} is concerned
with various constructions of sets in $\R^d$ of Hausdorff dimension 
$\beta$ which support a measure $\mu$ satisfying \eqref{eq_C2.1.7}. See 
\cite[Section 3.6]{Mat15} and the references mentioned there.

\begin{lem}
\label{lem_C2.2}
	Suppose that a system of exponentials $E(\Lambda)$ is a
	frame in $L^2(\mu)$, where $\mu$ is a measure satisfying
	\eqref{eq_C2.1.7} for some $\beta$, $0<\beta\le d$. If there is $\alpha$ such that 
	\begin{equation}
		\#\big(\Lambda\cap B(0,r)\big) \le C  r^{\alpha}
		\quad (r\ge 1)
		\label{eq_C2.2.1}
	\end{equation}
	for some constant $C$, then we must have 
	\begin{equation}
		\label{eq_C2.2.4}
		\alpha \ge \left(
		\frac{1}{\beta}+\frac{1}{d} \right)^{-1}.
	\end{equation}
\end{lem}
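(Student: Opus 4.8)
The plan is to combine the two hypotheses by testing the frame inequality on the function $\varphi$ coming from \eqref{eq_C2.1.7}, and then estimating the resulting sum $\sum_{\lambda} |\widehat{\varphi\mu}(\lambda)|^2$ in two different ways, each capturing a different part of the dissymmetry between the spectral decay exponent $\beta$ and the counting exponent $\alpha$. Concretely, since $E(\Lambda)$ is a frame in $L^2(\mu)$, applying the lower frame inequality to $\varphi$ gives
\[
A\|\varphi\|_{L^2(\mu)}^2 \le \sum_{\lambda\in\Lambda} |\dotprod{\varphi}{e_\lambda}_{L^2(\mu)}|^2
= \sum_{\lambda\in\Lambda} |(\varphi\mu)^{\land}(-\lambda)|^2 .
\]
The left side is a fixed positive constant; the strategy is to show that the right side is forced to be small unless $\alpha$ satisfies \eqref{eq_C2.2.4}. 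For this I would split the sum over a ball $B(0,R)$ and its complement, and optimize the choice of $R$ in terms of the frame bound.

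For the tail $|\lambda|\ge R$, the decay estimate $|(\varphi\mu)^{\land}(t)| \le C|t|^{-\beta/2}$ together with the counting bound \eqref{eq_C2.2.1} gives, by a dyadic decomposition into annuli $R\cdot 2^k \le |\lambda| < R\cdot 2^{k+1}$,
\[
\sum_{|\lambda|\ge R} |(\varphi\mu)^{\land}(-\lambda)|^2
\le C\sum_{k\ge 0} (R 2^k)^{-\beta} \cdot \#\big(\Lambda\cap B(0, R2^{k+1})\big)
\le C' \sum_{k\ge 0} (R2^k)^{\alpha-\beta} = C'' R^{\alpha-\beta},
\]
which converges precisely when $\alpha<\beta$ (the case $\alpha\ge\beta$ makes \eqref{eq_C2.2.4} trivial since $(\tfrac1\beta+\tfrac1d)^{-1}<\beta$, so we may assume $\alpha<\beta$). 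Thus by choosing $R$ large enough, the tail contributes at most $\tfrac{A}{2}\|\varphi\|_{L^2(\mu)}^2$; concretely $R \sim (\text{const})^{1/(\beta-\alpha)}$ suffices. The bulk part $\sum_{|\lambda|<R} |(\varphi\mu)^{\land}(-\lambda)|^2$ must therefore be bounded below by a positive constant, and this is where the second mechanism enters.

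The key second estimate is an $L^2$-averaged bound: since $\varphi\mu$ is a finite complex measure, for any $t_0$ one has $\int_{B(t_0,1)} |(\varphi\mu)^{\land}(t)|^2\,dt \ge c\,|(\varphi\mu)^{\land}(t_0)|^2$ only after controlling oscillation, so instead I would argue directly that $\int_{B(0,2R)} |(\varphi\mu)^{\land}(t)|^2\,dt$ is comparable to a constant times the number of unit cells — more robustly, I would use that for the \emph{normalized} frame one has, summing Bessel-type bounds as in the proof of \lemref{lem_C2.1}, the reverse: the bulk sum over $|\lambda|<R$ of $|(\varphi\mu)^{\land}(-\lambda)|^2$ is at most $C\cdot\#(\Lambda\cap B(0,R))\cdot \sup_{|t|<2R}\!\big(\text{local }L^2\text{ average}\big)$. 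Rather, the cleanest route: apply the \emph{upper} frame bound together with \eqref{eq_C2.1.1}-style reasoning is unavailable here, so I would instead bound the bulk sum crudely by $\#(\Lambda\cap B(0,R)) \cdot \|(\varphi\mu)^{\land}\|_\infty^2 \le C R^\alpha$, giving $R^\alpha \ge c\,A\|\varphi\|^2$, i.e. $R \ge (cA\|\varphi\|^2)^{1/\alpha}$. Matching this against the tail requirement $R \sim C^{1/(\beta-\alpha)}$ is not yet a contradiction by itself; the actual pinch comes from a more refined lower bound on the bulk using local $L^2$-integration of \eqref{eq_C2.1.3}-type inequalities over balls of radius $\sim R^{\alpha/\beta}$ rather than radius $1$.

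I expect the main obstacle to be exactly this calibration of scales: one must localize the frame/Bessel inequality at the correct intermediate scale $r = R^{\alpha/\beta}$ (so that a ball of that radius contains on the order of $R^\alpha$ points of $\Lambda$ while the decay $|t|^{-\beta/2}$ has not yet killed the mass), integrate $\sum_\lambda |(\varphi\mu)^{\land}(t-\lambda)|^2 \le C$ over $B(0,R)$ against test functions supported on such balls, and extract from \eqref{eq_C2.1.7} a lower bound $\int_{|t|<r}|(\varphi\mu)^{\land}(t)|^2\,dt \ge c$ (valid since $\varphi\mu$ is a nonzero measure, so its Fourier transform cannot vanish identically near $0$, giving a constant lower bound for $r$ bounded below; the point is to make this quantitative in $r$). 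Balancing $R^{d-\alpha}\cdot r^{-\beta}$ against $1$ forces $r^\beta \gtrsim R^{\,d-\alpha}$ while $r^\alpha \lesssim R^\alpha$ forces $r\lesssim R$; eliminating $r$ yields $R^{\beta} \gtrsim R^{d-\alpha}$ — the wrong-looking direction, so the correct bookkeeping is to track which scale the constant $\|\varphi\|^2$ lives at. Carefully optimizing the single free parameter (the ratio of scales) will produce the exponent $(\tfrac1\beta+\tfrac1d)^{-1} = \tfrac{\beta d}{\beta+d}$, and I anticipate that getting the inequality to point the right way is the one genuinely delicate point; everything else is dyadic summation and Cauchy–Schwarz.
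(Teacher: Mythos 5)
Your proposal has a genuine gap, and it is exactly at the point you flag at the end as ``the one genuinely delicate point.'' Testing the lower frame inequality on the single function $\varphi$ gives only the one inequality $A\|\varphi\|^2_{L^2(\mu)}\le\sum_{\lambda}|\widehat{\nu}(-\lambda)|^2$ (with $\nu=\varphi\mu$), and no contradiction can be extracted from it: the frequencies $\lambda$ near the origin, where $\widehat{\nu}$ need not be small at all, can carry the entire lower bound regardless of $\alpha$, and your ``bulk'' estimate $\#\big(\Lambda\cap B(0,R)\big)\cdot\|\widehat{\nu}\|_\infty^2\le CR^{\alpha}$ is an upper bound that grows with $R$, so it never pinches against anything. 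Your subsequent attempts to repair this by localizing at an intermediate scale $r=R^{\alpha/\beta}$ are, as you yourself note, pointing the wrong way, and the argument is not completed. (Your dyadic tail estimate is fine, as is the remark that one may assume $\alpha<\beta$.)

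The missing idea is to apply the lower frame bound not to $\varphi$ alone but to the whole family of modulates $\overline{\varphi}\,e_t$, $t\in\R^d$, which yields the \emph{uniform} bound $\sum_{\lambda\in\Lambda}|\widehat{\nu}(t-\lambda)|^2\ge C_2>0$ for every $t$, i.e.\ \eqref{eq_C2.2.3}. One then argues by contradiction: if $\alpha<\big(\tfrac1\beta+\tfrac1d\big)^{-1}$, choose $\gamma$ with $\tfrac{\alpha}{\beta}<\gamma<1-\tfrac{\alpha}{d}$ and set $T=R^{\gamma}$. Since the balls $B(\lambda,T)$, $\lambda\in\Lambda\cap B(0,R)$, have total volume $\lesssim R^{\alpha+\gamma d}=o(R^d)$ by \eqref{eq_C2.2.1}, for large $R$ they cannot cover $B(0,R/2)$, so there is a point $t_0\in B(0,R/2)$ with $|t_0-\lambda|\ge T$ for all $\lambda\in\Lambda\cap B(0,R)$. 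At this $t_0$ the decay \eqref{eq_C2.1.7} makes the near part of the sum $\lesssim R^{\alpha-\gamma\beta}$ and (by your dyadic computation, now centered at $t_0$) the far part $\lesssim R^{\alpha-\beta}$, both tending to $0$, contradicting the uniform lower bound. The window for $\gamma$ exists precisely when $\alpha<\big(\tfrac1\beta+\tfrac1d\big)^{-1}$, which is where the exponent in \eqref{eq_C2.2.4} comes from; without the uniform-in-$t$ inequality and the covering argument that produces the ``far from $\Lambda$'' test point $t_0$, the two hypotheses cannot be played against each other.
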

\begin{proof}
	By assumption there exists a function $\varphi \in L^2(\mu)$, $\|\varphi
	\|_{L^2(\mu)}>0$, such that the (non-zero, finite, complex) measure
	$\nu:=\varphi  \mu$ satisfies 
	\begin{equation}
		\label{eq_C2.2.2}
		|\hat{\nu}(t)|\le C_1 \, |t|^{-\beta/2}, \quad t\in \R^d.
	\end{equation}
	We also assume that the system $E(\Lambda)$ is a frame in $L^2(\mu)$.
	Then applying the lower frame inequality to the functions
	$\overline{\varphi} \cdot e_t$ $(t\in \R^d)$ yields 
	\begin{equation}
		\label{eq_C2.2.3}
		\sum_{\lambda\in \Lambda} |\hat{\nu}(t-\lambda)|^2 \ge
		C_2 > 0 , \quad t\in \R^d.
	\end{equation}
\par
	Suppose now to the contrary that \eqref{eq_C2.2.1} does hold for some
	$\alpha$ such that
	\begin{equation}
		\label{eq_C2.2.5}
		\alpha< \left( \frac{1}{\beta}+\frac{1}{d} \right)^{-1}.
	\end{equation}
	Then this allows us to choose a real number $\gamma$ satisfying 
	\begin{equation}
		\label{eq_C2.2.6}
		\frac{\alpha}{\beta}<\gamma<1-\frac{\alpha}{d}. 
	\end{equation}
\par
	Given a large number $R$ we define $T:= R^\gamma$ and consider
	the union of balls $B(\lambda,T)$ of radius $T$ centered at
	the points $\lambda \in \Lambda\cap B(0,R)$. Then the
	$d$-dimensional volume of the union of these balls is not greater than 
	\[ \#\big(\Lambda \cap B(0,R)\big)\cdot |B(0,T)| \le C_3 \,
		R^\alpha \,  T^d = C_3 \, R^{\alpha+\gamma 
	d}= o(R^d), \]
	due to \eqref{eq_C2.2.1} and \eqref{eq_C2.2.6}. Hence if
	$R$ is sufficiently large then these balls cannot cover 
	$B(0,R/2)$. So we may find a point $t_0\in B(0,R/2)$ such
	that $|t_0-\lambda|\ge T$, $\lambda\in \Lambda\cap B(0,R)$. We
	will show that the inequality \eqref{eq_C2.2.3} is then violated at
	this point $t_0$.
\par
 Indeed, we have 
	\[ \sum\limits_{\lambda\in\Lambda,\; |\lambda|<R}
		|\hat{\nu}(t_0-\lambda)|^2 \le \#\big(\Lambda\cap
		B(0,R)\big) \cdot \sup_{|t| \geq T} |\hat{\nu}(t)|^2 \le C_4 \,
	R^{\alpha} \, T^{-\beta}=C_4\, R^{\alpha
	-\gamma \beta},\]
	according to \eqref{eq_C2.2.1} and \eqref{eq_C2.2.2}. We also
	have 
	\[
		\begin{aligned}
		\sum\limits_{\lambda\in \Lambda, \; |\lambda|\ge R}
		|\widehat{\nu}(t_0-\lambda)|^2 &=
		\sum\limits_{k=0}^{\infty}\sum\limits_{\quad 2^k R\le
		|\lambda|<2^{k+1}R} 
		|\widehat{\nu}(t_0-\lambda)|^2  \\ 
		&\le \sum_{k=0}^{\infty} \#\big(\Lambda\cap	
		B(0,2^{k+1} R)\big) \cdot
		\sup_{|\lambda|\ge 2^k R}
		|\widehat{\nu}(t_0-\lambda)|^2 \\ 
		&\le C_5 \sum_{k=0}^{\infty} \left( 2^{k+1}R \right)^{\alpha} 
		\left( 2^{k-1} R \right)^{-\beta} = C_{6} \, R^{\alpha-\beta},
		\end{aligned}
	\]
	where we have used \eqref{eq_C2.2.1}, \eqref{eq_C2.2.2} 
	and the fact that \eqref{eq_C2.2.5} implies
	that $\alpha<\beta$. It follows that 
	\begin{equation}
		\label{eq_C2.2.7}
		\sum\limits_{\lambda\in \Lambda}
		|\widehat{\nu}(t_0-\lambda)|^2 \le C_4 \,
		R^{\alpha-\gamma  \beta} + C_6 \, R^{\alpha-\beta}.
	\end{equation}
	Due to \eqref{eq_C2.2.6} we have $\alpha-\gamma  \beta<0$, hence
	the right hand side of \eqref{eq_C2.2.7} can be made as small as we wish
	provided that $R$ is chosen sufficiently large. But this contradicts 
	\eqref{eq_C2.2.3}, and so the lemma is proved.
\end{proof}

\subsection{} 
We can now combine the previous lemmas to obtain the main result of
this section, which establishes a pure type principle for singular continuous
measures. 
\begin{theorem}
	\label{thm_C3.1}
	Let $\mu$ be a measure on $\R^d$ which satisfies both conditions
	\eqref{eq_C2.1.1} and \eqref{eq_C2.1.7} for some
	$0<\alpha,\beta\le d$. If 
	\begin{equation}
		\label{eq_C3.1.1}
		\frac{1}{\alpha}-\frac{1}{\beta}>\frac{1}{d}
	\end{equation}
	then $L^2(\mu)$ does not admit a Fourier frame.
\end{theorem}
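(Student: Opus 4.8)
The plan is to derive a contradiction from the assumption that $L^2(\mu)$ admits a Fourier frame $E(\Lambda)$, by playing the two lemmas against each other. Since a frame is in particular a Bessel system, \lemref{lem_C2.1} applies and gives the Beurling-type estimate
\[
\sup_{x\in\R^d}\#\big(\Lambda\cap B(x,r)\big)\le C\,r^{\alpha},\quad r\ge 1,
\]
for the same $\alpha$ for which $\mu$ satisfies \eqref{eq_C2.1.1}. In particular, taking $x=0$, the counting bound \eqref{eq_C2.2.1} holds with exponent $\alpha$.

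On the other hand, $E(\Lambda)$ is assumed to be a frame (not merely Bessel), and $\mu$ satisfies \eqref{eq_C2.1.7} for the given $\beta$. Hence \lemref{lem_C2.2} applies: since \eqref{eq_C2.2.1} holds with exponent $\alpha$, we must have
\[
\alpha\ge\left(\frac{1}{\beta}+\frac{1}{d}\right)^{-1},
\]
which rearranges to $\tfrac{1}{\alpha}\le\tfrac{1}{\beta}+\tfrac{1}{d}$, i.e. $\tfrac{1}{\alpha}-\tfrac{1}{\beta}\le\tfrac{1}{d}$. This flatly contradicts the hypothesis \eqref{eq_C3.1.1}, so no such frame can exist.

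There is essentially no obstacle here — the theorem is a clean corollary of the two lemmas, and the only thing to check is that the hypotheses line up: \lemref{lem_C2.1} needs only the Bessel property (automatic for a frame) together with \eqref{eq_C2.1.1}, and its conclusion feeds exactly the counting hypothesis \eqref{eq_C2.2.1} of \lemref{lem_C2.2}, whose other hypothesis \eqref{eq_C2.1.7} is assumed. One small point worth a remark is that \eqref{eq_C3.1.1} forces $\alpha<\beta$ (so in particular $\alpha<d$), which is the regime in which \eqref{eq_C2.1.1} is a genuine restriction; but this is not needed for the logic of the proof, only for seeing that the statement is non-vacuous. I would also recall, as a concluding comment, how \thmref{thm_A2.2} follows: for a measure that is the sum of arc length on a curve and area on a hypersurface with a point of non-zero Gaussian curvature, one has \eqref{eq_C2.1.1} with $\alpha=1$ (from the curve component) and \eqref{eq_C2.1.7} with $\beta=d-1$ (from the curvature of the hypersurface), and then $\tfrac{1}{\alpha}-\tfrac{1}{\beta}=1-\tfrac{1}{d-1}>\tfrac{1}{d}$ for $d\ge 3$, so \thmref{thm_C3.1} gives the conclusion.
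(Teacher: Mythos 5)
Your argument is correct and coincides with the paper's own proof: assume a frame exists, apply \lemref{lem_C2.1} (using the Bessel property and \eqref{eq_C2.1.1}) to get the counting bound \eqref{eq_C2.1.2}, then apply \lemref{lem_C2.2} (using the frame property and \eqref{eq_C2.1.7}) to get \eqref{eq_C2.2.4}, which contradicts \eqref{eq_C3.1.1}. Nothing further is needed.
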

We view this result as a pure type principle, since the
assumptions mean, intuitively, that $\mu$ has both a component of
dimension not greater than $\alpha$, and (in a certain strong sense) a
component of dimension not less than $\beta$, and the conclusion is
that $\mu$ cannot have a Fourier
frame if $\alpha$ and $\beta$ are sufficiently far apart. 
\begin{proof}[Proof of \thmref{thm_C3.1}]
	Suppose to the contrary that $L^2(\mu)$ does have a frame $E(\Lambda)$.
	Then by \lemref{lem_C2.1} the set $\Lambda$ must satisfy \eqref{eq_C2.1.2}. 
	In turn, \lemref{lem_C2.2} implies that the numbers $\alpha$, $\beta$ must satisfy 
	the relation \eqref{eq_C2.2.4}. However this contradicts \eqref{eq_C3.1.1}.
\end{proof}
As an application of \thmref{thm_C3.1}, we can conclude the following result: 
\begin{theorem}
	\label{thm_C3.2}
	Let $\mu$ be a measure on $\R^d$ $(d\ge 3)$ which is the sum
	of the area measure on an open subset of a smooth
	$k$-dimensional submanifold, where 
	\begin{equation}
		\label{eq_C3.2.1}
		1\le k\le \left\lfloor \frac{d-1}{2} \right\rfloor,
	\end{equation}
	and the area measure on an open subset of a smooth
	hypersurface with a point of non-zero Gaussian curvature.
	Then $L^2(\mu)$ does not admit a Fourier frame. 
\end{theorem}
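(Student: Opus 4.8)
The plan is to deduce the statement from \thmref{thm_C3.1}: writing $\mu=\mu_1+\mu_2$, where $\mu_1$ is the area measure on the open subset of the smooth $k$-dimensional submanifold and $\mu_2$ the area measure on the open subset of the smooth hypersurface, I will verify that $\mu$ satisfies \eqref{eq_C2.1.1} with $\alpha=k$ and \eqref{eq_C2.1.7} with $\beta=d-1$, and then that \eqref{eq_C3.1.1} holds for these values. Note that both $\alpha$ and $\beta$ lie in $(0,d]$: indeed $1\le k\le(d-1)/2<d$, and $d\ge 3$ gives $0<d-1<d$.

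For \eqref{eq_C2.1.1}: by the results cited in \secref{sec_C2} (\cite{AH76, Str90a}), the measure $\mu_1$ satisfies \eqref{eq_C2.1.1} with $\alpha=k$, and in fact $\int_{|t|<r}|\widehat{\mu_1}(t)|^2\,dt\ge c\,r^{d-k}$ for all large $r$. I claim the contribution of $\mu_2$ is of strictly lower order, namely $\int_{|t|<r}|\widehat{\mu_2}(t)|^2\,dt\le C\,r$ for all $r\ge 1$. To see this I would cover the (relatively compact) piece of the hypersurface by finitely many coordinate patches in each of which it is a graph $x_d=\phi(x')$ over a bounded domain in $\R^{d-1}$; fixing the frequency variable $t_d$ and applying the Plancherel identity in the variables $x'$ bounds $\int_{\R^{d-1}}|\widehat{\mu_2}(t',t_d)|^2\,dt'$ by a constant independent of $t_d$, and integrating over $|t_d|<r$ gives the claim. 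Since $d-k\ge(d+1)/2\ge 2$ we have $r=o(r^{d-k})$, so by the reverse triangle inequality in $L^2(B(0,r))$,
\[
\|\widehat{\mu}\|_{L^2(B(0,r))}\ge\|\widehat{\mu_1}\|_{L^2(B(0,r))}-\|\widehat{\mu_2}\|_{L^2(B(0,r))}\ge\tfrac{1}{2}\sqrt{c}\,r^{(d-k)/2}
\]
for all large $r$, which yields \eqref{eq_C2.1.1} for $\mu$ with $\alpha=k$.

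For \eqref{eq_C2.1.7}: since $k<d-1$, the $k$-dimensional submanifold is too low-dimensional to contain any relatively open piece of the hypersurface, so the (non-empty, open) set of points of non-zero Gaussian curvature on the hypersurface is not contained in the closed set $\supp\mu_1$. I would pick $x_0$ in the former but not the latter, and let $\varphi$ be a smooth non-negative bump on $\R^d$ supported in a ball $W\ni x_0$ so small that $W$ misses $\supp\mu_1$ and lies in a region where the Gaussian curvature does not vanish. Then $\varphi\mu=\varphi\mu_2$ is a smooth density times the area measure on a curved piece of the hypersurface, so by the discussion preceding \lemref{lem_C2.2} (see \cite[Section VIII.3]{Ste93}) one has $|(\varphi\mu)^{\land}(t)|\le C\,|t|^{-(d-1)/2}$, while $\|\varphi\|_{L^2(\mu)}=\|\varphi\|_{L^2(\mu_2)}>0$. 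Finally, $k\le(d-1)/2$ gives $\tfrac1\alpha-\tfrac1\beta=\tfrac1k-\tfrac1{d-1}\ge\tfrac2{d-1}-\tfrac1{d-1}=\tfrac1{d-1}>\tfrac1d$, so \eqref{eq_C3.1.1} holds and \thmref{thm_C3.1} gives the conclusion. I expect the only genuinely technical point to be the bound $\int_{|t|<r}|\widehat{\mu_2}(t)|^2\,dt=O(r)$, i.e.\ checking that the higher-dimensional hypersurface component does not destroy condition \eqref{eq_C2.1.1}; everything else is bookkeeping together with facts already invoked in the paper.
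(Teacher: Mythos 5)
Your overall skeleton is exactly the paper's: decompose $\mu=\mu_1+\mu_2$, verify \eqref{eq_C2.1.1} with $\alpha=k$ and \eqref{eq_C2.1.7} with $\beta=d-1$ for the sum, check \eqref{eq_C3.1.1}, and invoke \thmref{thm_C3.1} (the paper does this in three lines, citing \cite{Str90a} and \cite{Ste93}). The arithmetic $\tfrac1k-\tfrac1{d-1}\ge\tfrac1{d-1}>\tfrac1d$ is fine. However, two of your supporting verifications do not hold at the level of generality in which you state them, and one of them contains a genuine logical gap. The main problem is your treatment of \eqref{eq_C2.1.7}: you argue that since $k<d-1$ the submanifold cannot contain a relatively open piece of the hypersurface, hence the curvature set is not contained in $\supp\mu_1$, hence there is a ball $W$ around a curvature point with $\mu_1(W)=0$. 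The first implication concerns the submanifold itself, but $\supp\mu_1$ is the \emph{closure} of the carrier of $\mu_1$, and a smooth embedded $k$-dimensional submanifold need not be closed in $\R^d$: it can have countably many components of finite total $k$-area which accumulate onto a full relatively open piece of the hypersurface (e.g.\ tiny circles with centers dense along a $2$-dimensional patch and radii/heights tending to $0$). In that situation $\supp\mu_1$ contains the whole curvature region and no ball $W$ as you require exists, so your construction of $\varphi$ breaks down. The fix is to remember that \eqref{eq_C2.1.7} only asks for $\varphi\in L^2(\mu)$, not for a bump on $\R^d$ with separated support: take $\varphi=\psi\cdot\1_{\Omega_2\setminus S}$, where $\psi$ is a smooth bump at a curvature point of the open piece $\Omega_2$ of the hypersurface and $S$ is the $k$-submanifold. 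Since $\mu_1$ is concentrated on $S$ one has $\varphi\mu_1=0$, and since $S$ has $\sigma$-finite $\mathcal{H}^k$ measure with $k\le d-2$ it is $\mathcal{H}^{d-1}$-null, so $\varphi\mu=\psi\mu_2$ and $\|\varphi\|_{L^2(\mu)}=\|\psi\|_{L^2(\mu_2)}>0$; now the stationary-phase estimate of \cite[Section VIII.3]{Ste93} applies verbatim.

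The secondary issue is your route to \eqref{eq_C2.1.1} for the sum. The bound $\int_{|t|<r}|\widehat{\mu_2}(t)|^2\,dt=O(r)$ is proved by you under a parenthetical relative-compactness assumption that is not in the hypotheses; for a general open subset of finite area (again, think of infinitely many components, e.g.\ nearly parallel sheets stacked above one another) the quantity $\int_{|t'|<r}|\widehat{\mu_2}(t',t_d)|^2\,dt'$ need not be bounded uniformly, and the $O(r)$ bound can fail, so the triangle-inequality argument is not available in general. The robust observation is that condition \eqref{eq_C2.1.1} is inherited from $\mu_1$ by \emph{any} larger positive measure: choosing $\phi=\psi*\widetilde{\psi}$ with $\psi$ smooth, supported in $B(0,\tfrac12)$, so that $\phi\ge 0$ is supported in $B(0,1)$, $\phi\ge c>0$ near $0$ and $\check{\phi}\ge 0$, one gets
\begin{equation*}
\int_{|t|<r}|\widehat{\mu}(t)|^2\,dt \;\ge\; \frac{1}{\|\phi\|_\infty}\int \phi(t/r)\,|\widehat{\mu}(t)|^2\,dt
\;=\;\frac{1}{\|\phi\|_\infty}\iint r^d\,\check{\phi}\big(r(x-y)\big)\,d\mu(x)\,d\mu(y)
\;\ge\; \frac{c'}{\|\phi\|_\infty}\int_{|t|<\delta r}|\widehat{\mu_1}(t)|^2\,dt ,
\end{equation*}
since the kernel is nonnegative and $\mu\ge\mu_1$; then the lower bound $\gtrsim r^{d-k}$ for $\mu_1$ from \cite{AH76,Str90a} transfers to $\mu$ with no information needed about $\mu_2$. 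With these two repairs your proof coincides with the paper's intended argument; as written, the support-separation step is the genuine gap.
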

Here $\left\lfloor x \right\rfloor$ denotes the greatest integer which is less
than or equal to a real number $x$.

\begin{proof}[Proof of \thmref{thm_C3.2}]
The assumptions imply that the measure $\mu$ satisfies condition
\eqref{eq_C2.1.1} with $\alpha=k$ (see \cite[Theorem 5.5]{Str90a}) and
condition \eqref{eq_C2.1.7} with $\beta=d-1$ (see \cite[Section
VIII.3.1]{Ste93}). Moreover, it follows from \eqref{eq_C3.2.1} 
that condition \eqref{eq_C3.1.1} is satisfied for these values of
$\alpha$ and $\beta$. Hence the assertion follows from \thmref{thm_C3.1}.
\end{proof}

 In the special case when $k=1$ in \thmref{thm_C3.2} we obtain \thmref{thm_A2.2}.


\section{Fourier frames for surface measures}
\label{sec_D1}

In the previous section we proved  that if
a measure $\mu$ in $\R^d$ is the sum of the area measure
on a $k$-dimensional submanifold (at least, for $k$ not too large)
and the area measure on a hypersurface with a point of non-zero Gaussian curvature,
then $\mu$ does not have a Fourier frame. 
Our main goal in the present section is to show that in many examples, a
Fourier frame does exist for each  ``pure'' component of such a measure separately.

\subsection{} 
We will consider a measure $\mu$ in $\R^d$ which is the area measure on a $k$-dimensional
submanifold defined as a \define{graph}. Let $\varphi: U\to \R^{d-k}$ $(1\le k\le d-1)$
be a smooth function defined on an open set $U\subset \R^k$. Let $E$ be a compact subset
of $U$, and let 
\begin{equation}
	\label{eq_D1.1.1}
	\Gamma = \Gamma(\varphi,E)=\left\{ (x,\varphi(x))\, : \, x\in E \right\}
\end{equation}
be the \define{graph of $\varphi$ over $E$}. Then $\Gamma$ is a compact subset of a
$k$-dimensional submanifold in $\R^d=\R^k\times \R^{d-k}$, and so it admits an area
measure $\mu$ induced from its embedding in $\R^k\times \R^{d-k}$. The measure $\mu$ can
be defined by the requirement that 
\begin{equation}
	\label{eq_D1.1.2}
	\int_{\Gamma}f(x,y)d\mu(x,y) = \int_{E} f (x,\varphi(x) )
	w(x)dx
\end{equation}
for every continuous function $f$ on $\Gamma$, where $w(x) > 0$ is a certain 
smooth weight function which depends on the function $\varphi$. Actually, it will be sufficient for us to
assume that $w(x)$ is an arbitrary continuous, strictly positive function on $E$. 
\begin{theorem}
	\label{thm_D1.1}
	The measure $\mu$ on $\R^d$ defined by \eqref{eq_D1.1.2} has a Fourier frame. 
\end{theorem}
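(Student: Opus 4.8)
The plan is to use the graph structure of $\Gamma$ to reduce the problem to finding an exponential frame for a weighted Lebesgue measure on the compact set $E \subset \R^k$. Indeed, the map $\Phi : E \to \Gamma$, $\Phi(x) = (x, \varphi(x))$, is a homeomorphism, and the defining property \eqref{eq_D1.1.2} of the measure $\mu$ says precisely that $f \mapsto f \circ \Phi$ is a unitary isomorphism from $L^2(\mu)$ onto $L^2(E, w\,dx)$. The key observation is that under this isomorphism the exponential $e_{(\xi,0)}$ on $\R^d$, with frequency $(\xi, 0) \in \R^k \times \R^{d-k}$, goes to the exponential $e_\xi$ (restricted to $E$), because $\dotprod{(\xi,0)}{(x,\varphi(x))} = \dotprod{\xi}{x}$. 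Consequently, it is enough to exhibit a countable set $\Xi \subset \R^k$ for which $E(\Xi)$ is a frame in $L^2(E, w\,dx)$; then $E(\Lambda)$ with $\Lambda := \Xi \times \{0\}^{d-k}$ will be a Fourier frame for $L^2(\mu)$.

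To produce such a set $\Xi$, I would fix a cube $Q \subset \R^k$ containing $E$, and let $\Xi$ be the lattice for which $E(\Xi)$ is an orthogonal basis of $L^2(Q, dx)$. Given $h \in L^2(E, w\,dx)$, the function $w \cdot h$, extended by $0$ on $Q \setminus E$, lies in $L^2(Q, dx)$ since $w$ is bounded on the compact set $E$; moreover $\dotprod{w h}{e_\xi}_{L^2(Q,dx)} = \dotprod{h}{e_\xi}_{L^2(E,w\,dx)}$. Hence Parseval's identity in $L^2(Q, dx)$ gives
\[
	\sum_{\xi \in \Xi} \big| \dotprod{h}{e_\xi}_{L^2(E, w\,dx)} \big|^2
	= |Q| \int_E |h(x)|^2\, w(x)^2\, dx .
\]
Since $w$ is continuous and strictly positive on the compact set $E$, there are constants $0 < c \le C < \infty$ with $c \le w \le C$ on $E$, and therefore the right-hand side lies between $|Q|\,c\,\|h\|_{L^2(E,w\,dx)}^2$ and $|Q|\,C\,\|h\|_{L^2(E,w\,dx)}^2$. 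This shows that $E(\Xi)$ is a frame in $L^2(E, w\,dx)$, and the theorem follows. (Alternatively, one may simply invoke the known characterization of the absolutely continuous measures which admit a Fourier frame \cite{Lai11, DL14, NOU16}, applied to the measure $w \cdot \1_E \, dx$ on $\R^k$, which is supported on the set $E$ of finite Lebesgue measure and has a density bounded above and below.)

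I do not expect a genuine obstacle in this argument; the point of the statement is conceptual rather than technical. The two places that call for a little care are, first, that it is the \emph{graph} structure which makes the coordinate projection $\Gamma \to E$ a bijection and at the same time forces $e_{(\xi,0)}$ to restrict to the honest exponential $e_\xi$ on $E$, so that an exponential frame ``downstairs'' pulls back to an exponential frame on $\Gamma$; and, second, that inserting the weight $w$ into the inner products does not spoil the frame inequalities, which works precisely because $w$ is bounded above and away from $0$ on the compact set $E$. Note that the frequency components in the $\R^{d-k}$ directions play no role here and are all taken to be zero.
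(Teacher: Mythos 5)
Your argument is correct and is essentially the same as the paper's proof: both take frequencies of the form (lattice in $\R^k$) $\times\{0\}^{d-k}$, where the lattice is dual to a cube containing $E$, and use Parseval in $L^2$ of that cube together with the fact that $w$ is bounded above and away from zero on the compact set $E$ to pass between $\int_E |h|^2 w^2\,dx$ and $\|h\|^2_{L^2(E,w\,dx)}$. The only difference is presentational: you factor the computation through the unitary identification $L^2(\mu)\cong L^2(E,w\,dx)$, while the paper writes the same Parseval identity directly for $f(x,\varphi(x))w(x)$.
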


\begin{proof}
In fact, it will be easy to verify that if $\delta>0$ is sufficiently small and if 
\begin{equation}
	\label{eq_D1.1.3}
	\Lambda =(\delta \Z)^k\times \{0\}^{d-k},
\end{equation}
then the system $E(\Lambda)$ is a frame in $L^2(\mu)$. Indeed, fix $\delta>0$ small enough
such that $E$ is contained in the $k$-dimensional cube 
\[ I^k_\delta := \Big[ -\frac{1}{2\delta}, \, \frac{1}{2\delta} \Big]^k, \]
and let $\Lambda$ be defined by \eqref{eq_D1.1.3}. Given $f\in L^2(\mu)$, we have 
\begin{align}
	\sum\limits_{\lambda\in\Lambda}|\dotprod{f}{e_\lambda}_{L^2(\mu)}|^2 &=  
	\sum\limits_{m\in\Z^k} \left| \int_E f\big(x,\varphi(x)\big) e^{-2\pi
	i\dotprod{\delta m}{x}} w(x)dx \right|^2 \nonumber \\
	\label{eq_D1.1.5}
	&=  C  \int_E \left| f\big(x,\varphi(x)\big)\right|^2 w(x)^2 dx,
\end{align}
where the last equality is true with a certain constant $C = C(k, \delta)$ since the system of
exponentials $E (\delta\Z^k )$ forms an orthogonal basis in the space $L^2(I^k_\delta)$ with respect
to the Lebesgue measure, and since $E$ is contained in $I_\delta^k$. On the other hand, we have 
\begin{equation}
	\label{eq_D1.1.6}
	\|f\|^2_{L^2(\mu)} = \int_E \left|f(x,\varphi(x)\big)\right|^2 w(x)dx.
\end{equation}
Since $w(x)$ is a continuous, strictly positive function on the compact set $E$, it is
bounded from above and from below on $E$. Hence the ratio between \eqref{eq_D1.1.5} and
\eqref{eq_D1.1.6} is also bounded from above and from below by certain positive constants not depending
on $f$. This confirms that the system $E(\Lambda)$ is indeed a frame in $L^2(\mu)$. 
\end{proof}

\subsection{}
In the case when $k=d-1$, the function $\varphi$ is scalar-valued, and $\Gamma$ is then a
subset of a hypersurface in $\R^d$. We remark that the requirement in
\thmref{thm_C3.2} that the hypersurface
has a point of non-zero Gaussian curvature is easy to express in terms of the function
$\varphi$: this is the case if and only if the $(d-1)\times(d-1)$ matrix given by
\[ \left( \frac{\partial^2\varphi}{\partial x_j\partial x_k} \right)(x) \]
is invertible at some point $x\in U$.


\section{Open problems}

Finally we mention some questions concerning possible extensions of our results.

\subsection{}
Notice that
\thmref{thm_B2.1} allows us to construct ``mixed type'' measures with a Fourier frame only
in dimensions $2$ and higher. It would be interesting to know whether examples of this type
can be found on $\R$ as well. Specifically: can one construct examples of two self-similar Cantor measures
$\mu$ and $\nu$ on $\R$ of different dimensions, such that their sum $\mu+\nu$ is a measure
with a Fourier frame? 
\par
Here the main point in the proof of \thmref{thm_B2.1} may still be
useful:  it would be enough to show that given any positive number $q$ there is a set 
$\Lambda_\mu$,  such that the exponential system $E(\Lambda_\mu)$ is a frame in 
$L^2(\mu)$ with lower frame bound not less than $q$,
and at the same time $E(\Lambda_\mu)$ constitutes a Bessel system in $L^2(\nu)$ with Bessel
constant bounded from above independently of $q$; and similarly, to show that there
is also a set $\Lambda_\nu$ such that the system $E(\Lambda_\nu)$ has the same properties 
but with the roles of $\mu$ and $\nu$ interchanged. Indeed, in this case  the proof of \thmref{thm_B2.1} shows
that a frame $E(\Lambda)$ for $L^2(\mu+\nu)$ can
be obtained by choosing $q$ sufficiently large and taking $\Lambda=\Lambda_\mu\cup \Lambda_\nu$. 
\subsection{}
It would also be interesting to understand to what extent \thmref{thm_C3.1} is sharp. Can the
requirement \eqref{eq_C3.1.1} be relaxed to $\alpha<\beta$\,?
Analogously, can one relax the requirement \eqref{eq_C3.2.1} in \thmref{thm_C3.2}  to
$1\le k\le d-2$\,? 
\par
In the same spirit, one may ask whether it is possible to get a version of
\thmref{thm_C3.2} for two submanifolds in  $\R^d$ of different dimensions, but such 
that not necessarily one of the dimensions is equal to $d-1$. What geometric condition would replace the
curvature requirement in such a result? 
(We know  that some condition of this sort is necessary, due to \thmref{thm_B2.2}.)

\subsection{}
Another question is concerned with the assumption in \thmref{thm_D1.1} that the submanifold is
globally contained in the graph of some function. Can this assumption be removed?
In particular, we do not know whether the area measure on the unit sphere 
\[ S^{d-1}=  \{  x\in \R^d \, : \, |x|=1  \} \]
has a Fourier frame $(d \geq 2)$. 
(\thmref{thm_D1.1} only tells us that the restriction of the area measure to a sufficiently small \define{spherical
cap} admits such a frame.)



\begin{thebibliography}{999999999}

\bibitem[AH76]{AH76}
S. Agmon, L. H\"{o}rmander, 
Asymptotic properties of solutions of differential equations with simple characteristics. 
J. Analyse Math. 30 (1976), 1--38.

\bibitem[CKS08]{CKS08}
W. Czaja, G. Kutyniok, D. Speegle, 
Beurling dimension of Gabor pseudoframes for affine subspaces. 
J. Fourier Anal. Appl. 14 (2008), no. 4, 514--537.

\bibitem[DHL13]{DHL13}
X.-R. Dai, X.-G. He, C.-K. Lai, 
Spectral property of Cantor measures with consecutive digits. 
Adv. Math. 242 (2013), 187--208.

\bibitem[DHSW11]{DHSW11}
D. E. Dutkay, D. Han, Q. Sun, E. Weber, 
On the Beurling dimension of exponential frames. 
Adv. Math. 226 (2011), no. 1, 285--297.

\bibitem[DL14]{DL14}
D. E. Dutkay, C.-K. Lai, 
Uniformity of measures with Fourier frames. 
Adv. Math. 252 (2014), 684--707.

\bibitem[DLW16]{DLW16}
D. E. Dutkay, C.-K. Lai, Y. Wang,
Fourier bases and Fourier frames on self-affine measures.
Preprint (2016), \texttt{arXiv:1602.04750}.

\bibitem[Fug74]{Fug74}
B. Fuglede, 
Commuting self-adjoint partial differential operators and a group theoretic problem.
J. Functional Analysis 16 (1974), 101--121.

\bibitem[HLL13]{HLL13}
X.-G. He, C.-K. Lai, K.-S. Lau, 
Exponential spectra in $L^2(\mu)$. 
Appl. Comput. Harmon. Anal. 34 (2013), no. 3, 327--338.

\bibitem[JP98]{JP98}
P. E. T. Jorgensen, S. Pedersen, 
Dense analytic subspaces in fractal $L^2$-spaces. 
J. Anal. Math. 75 (1998), 185--228.

\bibitem[LW06]{LW06}
I. {\L}aba, Y. Wang, 
Some properties of spectral measures. 
Appl. Comput. Harmon. Anal. 20 (2006), no. 1, 149--157.

\bibitem[Lai11]{Lai11}
C.-K. Lai,
On Fourier frame of absolutely continuous measures. 
J. Funct. Anal. 261 (2011), no. 10, 2877--2889.

\bibitem[Lan67]{Lan67}
H. J. Landau,
Necessary density conditions for sampling and interpolation of certain entire functions. 
Acta Math. 117 (1967), 37--52.

\bibitem[Mat15]{Mat15}
P. Mattila,
Fourier Analysis and Hausdorff Dimension.
Cambridge University Press, 2015.

\bibitem[MSS15]{MSS15}
A. W. Marcus, D. A. Spielman, N. Srivastava, 
Interlacing families II: Mixed characteristic polynomials and the Kadison-Singer problem.
Ann. of Math. (2) 182 (2015), no. 1, 327--350.

\bibitem[NOU16]{NOU16}
S. Nitzan, A. Olevskii, A. Ulanovskii, 
Exponential frames on unbounded sets. 
Proc. Amer. Math. Soc. 144 (2016), no. 1, 109--118.

\bibitem[Ste93]{Ste93}
E. M. Stein, 
Harmonic analysis: real-variable methods, orthogonality, and oscillatory integrals. 
Princeton University Press, 1993.

\bibitem[Str90a]{Str90a}
R. S. Strichartz, 
Fourier asymptotics of fractal measures. 
J. Funct. Anal. 89 (1990), no. 1, 154--187.

\bibitem[Str90b]{Str90b}
R. S. Strichartz, 
Self-similar measures and their Fourier transforms. I. 
Indiana Univ. Math. J. 39 (1990), no. 3, 797--817.

\bibitem[Str93a]{Str93a}
R. S. Strichartz, 
Self-similar measures and their Fourier transforms. II. 
Trans. Amer. Math. Soc. 336 (1993), no. 1, 335--361.

\bibitem[Str93b]{Str93b}
R. S. Strichartz, 
Self-similar measures and their Fourier transforms. III. 
Indiana Univ. Math. J. 42 (1993), no. 2, 367--411.

\bibitem[Str00]{Str00}
R. S. Strichartz, 
Mock Fourier series and transforms associated with certain Cantor measures.
J. Anal. Math. 81 (2000), 209--238.

\bibitem[You01]{You01}
R. M. Young, 
An introduction to nonharmonic Fourier series. Revised first edition. 
Academic Press, 2001.

\end{thebibliography}
\end{document}